\theoremstyle{definition}
\numberwithin{equation}{section}
\newtheorem{thm}{Theorem}[section]
\newtheorem{rem}[thm]{Remark}
\newtheorem{lem}[thm]{Lemma}
\newtheorem{prop}[thm]{Proposition}
\newtheorem{cor}[thm]{Corollary}
\newtheorem{defi}[thm]{Definition}
\newcommand{\cf}{\ensuremath{\mathcal F}}
\newcommand{\tor}{\ensuremath{\mathbb{T}^d}}
\newcommand{\Z}{\mathbb{Z}}
\newcommand{\N}{\mathbb{N}}
\newcommand{\R}{\mathbb{R}}
\newcommand{\C}{\mathbb{C}}
\newcommand{\abs}[1]{\Big\vert #1 \Big\vert}	
\newcommand{\ls}{\langle}
\newcommand{\rs}{\rangle}
\renewcommand{\l}{\Big\langle}
\renewcommand{\r}{\Big\rangle}
\renewcommand{\a}{\alpha}
\renewcommand{\phi}{\varphi}
\newcommand{\e}{{\rm e}}
\newcommand{\eps}{\varepsilon}
\newcommand{\wt}{\widetilde}
\newcommand\dint{{\,\rm d}}
\newcommand{\supp}{{\rm supp}}
\newcommand{\sinc}{{\rm sinc}}
\newcommand{\X}{\mathbb{X}}
\renewcommand{\S}{\mathcal{S}}
\newcommand{\F}{\mathcal{F}}
\newcommand{\Bspt}{\ensuremath{\mathbf{B}^s_{p,\theta}}}
\newcommand{\Fspt}{\ensuremath{\mathbf{F}^s_{p,\theta}}}
\newcommand{\Fsptw}{\ensuremath{\mathbf{F}^s_{p,2}}}
\newcommand{\Wsp}{\ensuremath{\mathbf{W}^s_p}}
\newcommand{\Bo}{\ensuremath{\mathring{\mathbf B}_{p,\theta}^s}}
\newcommand{\Fo}{\ensuremath{\mathring{\mathbf F}_{p,\theta}^s}}
\newcommand{\Wo}{\ensuremath{\mathring{\mathbf W}_p^s}}
\newcommand{\A}{\ensuremath{\mathbf{A}}}
\newcommand{\B}{\ensuremath{\mathbf{B}}}
\newcommand{\T}{\ensuremath{\mathbf{F}}}
\newcommand{\Aspt}{\ensuremath{\mathbf{A}^s_{p,\theta}}}
\newcommand{\Ao}{\ensuremath{\mathring{\mathbf A}_{p,\theta}^s}}
\newcommand{\bproof}{\begin{proof}}
\newcommand{\eproof}{\end{proof}}
\title[Frolov's cubature]{The role of Frolov's cubature formula for functions
with bounded mixed derivative}   
\keywords{Numerical integration, Besov-Triebel-Lizorkin space, 			
	Sobolev space, Frolov cubature formula}
\date{\today}
\author{ 
Mario Ullrich
\and 
Tino Ullrich
}
\begin{document}
 
\begin{abstract} 
We prove upper bounds on the order of convergence of Frolov's cubature formula 
for numerical integration in function spaces of dominating mixed smoothness 
on the unit cube with homogeneous boundary condition. 
More precisely, we study worst-case integration errors for Besov $\Bspt$ and Triebel-Lizorkin spaces $\Fspt$ and our results 
treat the whole range of admissible parameters 
$(s\geq 1/p)$. 
In particular, we obtain upper bounds for the difficult the case of small smoothness which is 
given for Triebel-Lizorkin spaces $\Fspt$ in case $1<\theta<p<\infty$ with $1/p<s\leq 1/\theta$. 
The presented upper bounds on the worst-case error show a completely different behavior 
compared to ``large'' smoothness $s>1/\theta$. 
In the latter case the presented upper bounds are optimal, i.e., they can not be improved 
by any other cubature formula. The optimality for ``small'' smoothness is open. 
\end{abstract} 

\maketitle

\section{Introduction}
The efficient integration of multivariate functions is a
crucial task for the numerical treatment of many multi-parameter real-world
problems. The computation of the integral can
almost never be done analytically since often the available
information of the signal or function $f$ is highly incomplete or simply no closed-form solution exists. 
A cubature rule approximates the integral $I(f) = \int_{[0,1]^d} f(x) \, dx$ 
by computing a weighted sum of finitely many function values and 
the $d$-variate function $f$ is assumed to belong to some (quasi-)normed function space 
$\mathbf{F}_d \subset C([0,1]^d)$ of continuous functions.
The optimal worst-case error with respect to 
$\mathbf{F}_d$ is given by 
\begin{equation}\label{eq:minimal}
  \mbox{Int}_n(\mathbf{F}_d) \,:=\, \inf\limits_{x^1,\dots,x^n\in[0,1]^d}\,
	\inf\limits_{\lambda^1,\dots,\lambda^n\in\R}\,
	\sup\limits_{\|f\|_{\mathbf{F}_d} \leq 1}
	\left|I(f)-\sum\limits_{i=1}^n \lambda^i f(x^i)\right|\,.
\end{equation}
In this paper we give  an (almost) complete answer for the question on the correct asymptotical behavior 
of $\mbox{Int}_n(\mathbf{F}_d)$ for several classes of functions with dominating mixed 
smoothness. The by now classical research topic of numerically integrating 
$d$-variate functions with mixed smoothness properties goes back to the work of 
Korobov~\cite{Ko59}, Hlawka~\cite{Hl62}, and Bakhvalov~\cite{Ba63} in the 1960s and was 
continued later by 
numerous authors including Frolov \cite{Fr76}, Temlyakov~\cite{Te86,Te90,Te91,Te03}, 
Dubinin~\cite{Du, Du2}, Skriganov~\cite{Sk94}, Triebel~\cite{Tr10}, 
Hinrichs et al.\ ~\cite{Hi10,HiMaOeUl14, HO14}, Hinrichs, Novak, M. Ullrich, Wo{\'z}niakowski
\cite{HNUW14a,HNUW14}, Hinrichs, Novak, M. Ullrich \cite{HNU14}, D\~ung, T. Ullrich \cite{DU14}, T. Ullrich
\cite{TU14}, 
Dick and Pillichshammer~\cite{DP}, and Markhasin~\cite{Ma13,Ma13a,Ma13b} to mention just a few. 
In contrast to the quadrature of univariate functions, where equidistant point
grids lead to optimal formulas, the multivariate problem is much more involved.
In fact, the choice of proper sets $X_n \subset [0,1]^d$ of integration nodes in
the $d$-dimensional unit cube is the essence of ``discrepancy theory'' and connected with deep problems
in number theory, already for $d=2$. 

We study Besov-Triebel-Lizorkin classes $\Bo$ and $\Fo$ with dominating mixed smoothness on the unit cube $[0,1]^d$ 
with homogeneous boundary condition and 
provide lower and upper bounds for $\mbox{Int}_n(\mathring{\mathbf{A}}^s_{p,\theta})$
which are sharp in order. Here and in the sequel $\A$ stands for $\B$ or $\T$.
This represents the crucial step for getting the same order of convergence also for
the larger periodic spaces $\Aspt(\tor)$ and non-periodic spaces $\Aspt(\R^d)$ 
as 
the recent paper \cite{NUU16} shows.
As a motivation let us emphasize that Besov regularity is the correct 
framework when it comes to 
so-called kink functions, which often occur in mathematical finance; e.g.,\ the pricing of a European call option, 
whose pay-off function possesses a kink at the strike price~\cite{glasserman2004monte}. 
Indeed, the simple example $f(t) = |2t-1|$ belongs to 
$\B^{2}_{1,\infty}(\mathbb{T})$, but not to any of the Sobolev spaces 
$\mathbf{W}^2_p(\mathbb{T})$, $p\ge1$.
Looking at the bounds \eqref{thm:error-B} and \eqref{thm:error-F} below we see that the main 
order of convergence only depends on the smoothness $s$. Hence, to explain the (optimal) order of convergence
$n^{-2}$ for the numerical integration of the function $f$, the classes $\Bspt$ represent an appropriate framework.
The method of our proof allows us to tackle also the (technically more difficult) 
Triebel-Lizorkin spaces $\Fo$, which are, in the special case 
$\theta=2$, $1<p<\infty$, the Sobolev spaces $\Wo$. Thereby, we obtain the first non-trivial upper bounds on 
$\mbox{Int}_n(\Wo)$ in the range $p>2$, $1/p<s\le1/2$, if the dimension is larger than 2.

We show that there exists a cubature formula, namely the classical Frolov construction, which provides the
optimal order of convergence for ``almost'' the entire Besov-Triebel-Lizorkin scale of 
function spaces (including anisotropic mixed smoothness, see Remark \ref{rem:universal} below). 
Note that in some cases we do not have matching lower bounds, cf.~Theorem~\ref{thm:error2}, however we 
conjecture the optimality also in those cases. Let us emphasize that,
in contrast to well-known sparse grid approaches, the nodes and weights of this cubature formulas are constructed 
independently of the parameters of the respective spaces; i.e., independently of $s$, $p$ 
and $\theta$. In other words, we do not need to incorporate any further information on the function to be integrated.

Frolov \cite{Fr76} introduced this method in 1976, 
see also~\cite{MU14} 
for a tutorial paper on the method. 
For the definition of the method, 
let $T\in\R^{d\times d}$ be a suitable matrix with unit determinant, 
see Section~\ref{sec:frolov}, and define the lattices $\X_n=n^{-1/d}T(\Z^d)$, 
$n>1$. Frolov's cubature formula is then rather simply defined as
\begin{equation}\label{eq:frolov}
Q_n(f) \,=\, \frac{1}{n}\sum_{x\in \X_n\cap[0,1)^d} f(x)
\end{equation}
and it is well known that $|\X_n\cap[0,1)^d|\asymp n$. For the detailed definition of the cubature formula $Q_n$ and the spaces $\Aspt$ we refer 
to Section~\ref{sec:frolov} and Section~\ref{sec:spaces}, respectively. 
We define the \emph{worst-case error} of the cubature rule $Q_n$ in a class $\mathbf{F}_d$ by 
\[
e(Q_n, \mathbf{F}_d) \,:=\, \sup_{\|f\|_{\mathbf{F}_d}\le 1} 
\abs{\int_{[0,1]^d}f(x)\dint x - Q_n(f)}.
\]
Furthermore, we define the spaces $\Ao$ as the collection of all $d$-variate functions from $\Aspt$ 
with support in the unit cube, cf.~\eqref{Ao} below. 
It seems that the cubature formula $Q_n$ of Frolov itself is suitable only for 
functions from $\Ao$, see Remark~\ref{rem:Frolov-Ao} below. 
However, in a forthcoming paper, see~\cite{NUU16}, we will present two general
modifications of $Q_n$ that define cubature formulas with non-equal weights providing the same order of
convergence also for periodic or non-periodic functions, respectively. 
Note that {\bf all} upper bounds on $\mbox{Int}_n(\Ao)$ that will be proven 
in this article are constructive. 
In particular, we will prove all upper bounds via 
$\mbox{Int}_n(\mathbf{F}_d)\lesssim e(Q_n, \mathbf{F}_d)$ for a specific 
cubature formula $Q_n$.
In fact, these cubature formulas will be $Q_n$ from~\eqref{eq:frolov}, for functions 
from~$\Ao$.

The main results of this article are the following Theorems~\ref{thm:error}~\&~\ref{thm:error2}.

\goodbreak

\begin{thm}\label{thm:error} Let $1\leq p,\theta \leq \infty$ (with $p<\infty$ in the $\T$-case). \\
If $s>1/p$, then 
\begin{equation}\label{thm:error-B}
   \mbox{Int}_n\bigl(\Bo\bigr) \,\asymp\, n^{-s}(\log n)^{(d-1)(1-1/\theta)}\quad,\quad n\geq 2\,.
\vspace{3mm}
\end{equation}
If $s>\max\{1/p,1/\theta\}$, then
\begin{equation}\label{thm:error-F}
    \mbox{Int}_n\bigl(\Fo\bigr) \,\asymp\, n^{-s}(\log n)^{(d-1)(1-1/\theta)}\quad,\quad n\geq 2\,.
\end{equation}
\end{thm}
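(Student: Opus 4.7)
The plan is to prove the upper bound $\mbox{Int}_n(\Ao) \lesssim e(Q_n, \Ao)$ by direct estimation of Frolov's cubature formula, and to obtain the lower bound via a standard fooling-function argument. Since functions in $\Ao$ have support in $[0,1]^d$, their periodic extensions $\wt f$ coincide with $f$ on $[0,1)^d$, hence
\begin{equation*}
Q_n(f) - I(f) \,=\, \frac{1}{n}\sum_{x \in \X_n \cap [0,1)^d} \wt f(x) \,-\, \int_{\tor} \wt f(x)\dint x.
\end{equation*}
By the Poisson summation formula on the lattice $\X_n$, the right-hand side equals $\sum_{k \in (n^{1/d} T^{-\top}\Z^d) \setminus\{0\}} \widehat{\wt f}(k)$, provided one can justify absolute convergence (this uses $s>1/p$ to ensure $f \in C$ and sufficient Fourier decay through the dyadic decomposition below).

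\textbf{Dyadic block estimates.} I would now decompose $\wt f = \sum_{\bar j \in \N_0^d} f_{\bar j}$ into the standard dyadic Fourier blocks with spectrum in rectangles $R_{\bar j}$ of side $\sim 2^{j_i}$. The key geometric input is Frolov's lattice property: the shifted dual lattice $n^{1/d}T^{-\top}\Z^d$ avoids the open box $\prod_i (-c\, n^{1/d}, c\, n^{1/d})$ and has $|n^{1/d}T^{-\top}\Z^d \cap R_{\bar j}| \lesssim \max\{1, 2^{|\bar j|_1}/n\}$. Combining this with the embedding $\ell_1 \hookrightarrow \ell_\infty$ on a fixed block and Nikolskii-type inequalities yields a block-wise error bound of the form
\begin{equation*}
\bigl|Q_n(f_{\bar j}) - I(f_{\bar j})\bigr| \,\lesssim\, \ind_{\{|\bar j|_1 > \log_2 n\}} \cdot 2^{-s|\bar j|_1}\, \|2^{s|\bar j|_1} f_{\bar j}\|_p\,,
\end{equation*}
since for $|\bar j|_1 \leq \log_2 n$ the block lies below the Frolov resolution and contributes only to $\widehat{\wt f}(0)$, so the error vanishes identically.

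\textbf{Summation and the logarithmic factor.} Summing the block errors and applying H\"older's inequality in $\bar j$ against the $\ell_\theta$ (or $L_p(\ell_\theta)$ in the $\T$-case) quasi-norm of the dyadic decomposition gives
\begin{equation*}
e(Q_n, \Ao) \,\lesssim\, \Bigl(\sum_{|\bar j|_1 > \log_2 n} 2^{-s|\bar j|_1 \theta'}\Bigr)^{1/\theta'} \|f\|_{\Aspt}\,.
\end{equation*}
A direct count of the level sets $\{|\bar j|_1 = L\}$, whose cardinality is $\sim L^{d-1}$, together with a geometric tail sum in $L \geq \log_2 n$, produces the rate $n^{-s}(\log n)^{(d-1)/\theta'} = n^{-s}(\log n)^{(d-1)(1-1/\theta)}$. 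In the Triebel–Lizorkin case one first passes to a Peetre maximal function estimate and then applies the Fefferman–Stein vector-valued maximal inequality, which is why we require $\theta>1$ and $s>1/\theta$ in addition to $s>1/p$.

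\textbf{Main obstacle and lower bound.} The hardest step is the block-wise error estimate in the $\T$-case: controlling the square function in $L_p$ rather than the more straightforward $\ell_\theta$-norm in the $\B$-case; this is where the extra condition $s>1/\theta$ enters. For the matching lower bound I would use the classical Bakhvalov–Temlyakov strategy: given any $n$ nodes and weights, construct a fooling function $g$ as a smooth bump supported in a small axis-parallel box of volume $\sim 1/n$, dilated/translated suitably so that $g$ vanishes at all nodes while $I(g) \gtrsim n^{-s}(\log n)^{(d-1)(1-1/\theta)}\|g\|_{\Aspt}$. The freedom in choosing the supporting box among $\sim (\log n)^{d-1}$ disjoint candidates yields the logarithmic factor, matching the upper bound.
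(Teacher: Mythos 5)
Your overall strategy---Poisson summation over Frolov's lattice, a dyadic decomposition, and H\"older in the dyadic index---matches the paper's skeleton, but several load-bearing details differ in ways that either fail or are left unjustified.

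\textbf{(1) The decomposition you use breaks the periodization argument.} You decompose $\wt f$ into Fourier-side dyadic blocks $f_{\bar j}$ with compactly supported spectrum; such blocks are \emph{not} compactly supported in space. The paper instead uses the local-means characterization: kernels $\Psi_m\in C_0^\infty$ and a dual Calder\'on pair $\Lambda_m$ with $\sum_m \F\Lambda_m\,\F\Psi_m=1$. This is essential because the error is recast as a pairing of two lattice-sum trigonometric series, one controlled by Lemma~\ref{lem:norm2} (which needs compact \emph{Fourier} support, so $\F\Lambda_m$), the other by Lemma~\ref{lem:norm1} (which needs compact \emph{spatial} support, so $\Psi_m*f$ with $\supp(\Psi_m*f)\subset[-1,2]^d$). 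With your $f_{\bar j}$ the second lemma does not apply, and it is unclear how to close the periodization step.

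\textbf{(2) The block estimate is not established by the sketched argument.} Your claimed bound $|Q_n(f_{\bar j})-I(f_{\bar j})|\lesssim\ind_{\{|\bar j|_1>\log_2 n\}}\|f_{\bar j}\|_p$ does not follow from the ``$\ell_1\hookrightarrow\ell_\infty$'' or Nikolskii inequalities you cite; for $p<\infty$ point evaluation is not bounded on $L_p$, and a naive Fourier-coefficient count together with Nikolskii produces a factor $2^{|\bar j|_1}/n$, not $1$. The paper instead applies H\"older on the pairing $\langle\sum_k\F\Lambda_m(B_nk)e^{2\pi ik\cdot},\sum_\ell\F[\Psi_m*f](B_n\ell)e^{2\pi i\ell\cdot}\rangle$ and obtains the correct block bound $Z_n(m)^{1/p}\|\Psi_m*f\|_p\lesssim(2^{|m|_1}/n)^{1/p}\|\Psi_m*f\|_p$; this extra $n$-dependent factor is then absorbed in the subsequent H\"older-in-$m$ step, and the sum still collapses to $n^{-s}(\log n)^{(d-1)(1-1/\theta)}$. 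Your proposal skips the duality step that makes this rigorous.

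\textbf{(3) The Triebel--Lizorkin case.} For $s>\max\{1/p,1/\theta\}$ the paper does not use Peetre maximal functions or Fefferman--Stein; it reduces Theorem~\ref{thm:TL}(i) to the Besov case via embeddings ($\Fspt\hookrightarrow\Bspt$ if $p\le\theta$, and $\Fo\hookrightarrow\mathring{\mathbf B}_{\theta,\theta}^s$ via compact support if $p>\theta$). Also, the statement does allow $\theta=1$; your claim that $\theta>1$ is required is not accurate.

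\textbf{(4) Lower bound.} Your sketch ``choose the supporting box among $\sim(\log n)^{d-1}$ disjoint candidates'' is misleading. A single bump yields only the rate $n^{-s}$; the logarithmic factor is obtained by \emph{summing} $\sim(\log n)^{d-1}$ atoms with the correct $\ell_\theta$-scaling (the paper's test function $g^1_{s,\theta}$ in Section~\ref{sec:lower}) and applying the atomic decomposition bound \eqref{flelambda}.

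In short: the approach is in the right family, but the missing ingredient that makes the paper's argument work is the local-means/Calder\'on dual pair, together with the two norm lemmas, and without it the block estimate is unjustified and the periodization step is broken.
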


The upper bound in relation~\eqref{thm:error-B} has been already proved by 
Dubinin~\cite{Du2}, see also the list of references below. However, our method is different (and simpler) and gives a
unified proof for Besov and Triebel-Lizorkin spaces. That is why we give also the relatively short proof for this case.

The lower bounds for $\theta = \infty$ are due to Bakhvalov \cite{Ba72}. The technique in the recent paper
Temlyakov \cite{Te14} also gives the lower bound for $\theta < \infty$.  Our proof of the
lower bounds in Section 7 is based on atomic decomposition and similar to the approach in \cite{DU14}. This approach
also gives the correct lower bounds in case $p<1$, see Theorem \ref{thm:lb}. 

In addition, we particularly pay attention to the case of small smoothness 
which is present for the spaces $\Fspt$ if $p>\theta$ and $1/p<s \leq 1/\theta$. 
It has been already observed by Temlyakov~\cite{Te93,Te94,Te03} that for 
the Fibonacci lattice rule on $\mathbb{T}^2$ the asymptotical worst-case error 
order with respect to the Sobolev class $\Wsp$
differs essentially in the $\log$-exponent from \eqref{thm:error-B} in the critical range of 
parameters $p > 2$ and $1/p<s\leq 1/2$. 
That is, Temlyakov~\cite{Te94} proved for this range that in dimension $d=2$
\[
e(\Phi_{n},\Wsp) \,\asymp\, \begin{cases} 
b_n^{-s}(\log b_n)^{1-s}, & \text{if}\quad 1/p<s<1/2,\\
b_n^{-s}(\log b_n)^{1-s}(\log\log b_n)^{1-s}, & \text{if}\quad s=1/2\,,
\end{cases}
\]
where $\Phi_n$ denotes the 
Fibonacci cubature rule with respect to the Fibonacci lattice with $b_n$ points, where $b_n$ denotes the $n$th
Fibonacci number. A matching lower bound for arbitrary cubature formulas 
is conjectured to hold, but is still not proven.
We were able to show a corresponding upper bound for $\Fo$ 
also in higher dimensions for the Frolov cubature rule. 

\begin{thm}\label{thm:error2} Let $1\leq p<\infty$ and $1\leq \theta < p < \infty$.\\
{\em (i)} If $1/p<s<1/\theta$ then
$$
    \mbox{Int}_n\bigl(\Fo\bigr) \,\lesssim\, n^{-s}(\log n)^{(d-1)(1-s)}\quad,\quad n \geq 2\,.
$$
{\em (ii)} If $s = 1/\theta$ then 
$$
   \mbox{Int}_n\bigl(\Fo\bigr) \,\lesssim\, n^{-s}(\log n)^{(d-1)(1-s)}(\log\log n)^{1-s}\quad,\quad n \geq 3\,.
$$
\end{thm}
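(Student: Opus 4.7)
The plan is to combine the hyperbolic-cross exactness of Frolov's rule with an atomic (wavelet) decomposition of $\Fo$, and then to exploit the small-smoothness regime through a careful scale-by-scale estimate.

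As will have been established in Section~4, there is an integer $M \asymp \log_2 n$ such that $Q_n$ is exact on functions whose Fourier support lies inside the hyperbolic cross of ``radius'' $n$; in particular $(Q_n - I)(P_M f) = 0$, where $P_M f = \sum_{|\bar j|_1 \le M} f_{\bar j}$. Writing $(Q_n - I)(f) = -I(f - P_M f) + Q_n(f - P_M f)$ and using the pointwise Littlewood--Paley characterization of $\Fspt$ together with H\"older in $\bar j$ with exponents $(\theta,\theta')$, the first term satisfies
$$
|I(f - P_M f)| \lesssim \|f - P_M f\|_{L_1} \lesssim M^{(d-1)(1-1/\theta)}\,2^{-sM}\,\|f\|_{\Fspt},
$$
which is already compatible with (i) and (ii) because $1-1/\theta \le 1-s$ whenever $s \le 1/\theta$.

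The main work is to bound the cubature term $|Q_n(f - P_M f)|$. I expand $f$ in a wavelet-type atomic decomposition $f = \sum s_{\bar j, k}\, a_{\bar j, k}$ with $L_p$-normalized atoms ($\|a_{\bar j,k}\|_\infty \lesssim 2^{|\bar j|_1/p}$) supported in dyadic cubes $Q_{\bar j, k}$ of volume $2^{-|\bar j|_1}$ and with vanishing moments; the corresponding sequence norm is equivalent to $\|f\|_{\Fspt}$. For $|\bar j|_1 > M$ each cube $Q_{\bar j,k}$ contains at most one Frolov point, so summing the atom-wise contributions (and using $I(a_{\bar j, k}) = 0$) gives
$$
|Q_n(f - P_M f)| \lesssim \frac{1}{n}\sum_{x \in \X_n \cap [0,1)^d} H(x), \qquad H(x) := \sum_{|\bar j|_1 > M} 2^{|\bar j|_1/p}\, |s_{\bar j, k(x,\bar j)}|,
$$
where $k(x,\bar j)$ is the unique index with $x \in Q_{\bar j, k}$.

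The decisive step is to bound this discrete Frolov sum by $M^{(d-1)(1-s)}\,2^{-sM}\,\|f\|_{\Fspt}$. A naive H\"older in $\bar j$ does not work here because $\frac{1}{n}\sum_x H(x)$ cannot be replaced by $\|H\|_{L_1}$ for a piecewise-constant $H$. Instead, I split the scales $m = |\bar j|_1$ into a \emph{near-critical range} $M < m \le M + \Lambda$ and a \emph{tail range} $m > M + \Lambda$ for a threshold $\Lambda$ to be optimized. On the tail range the Frolov lattice meets only $\lesssim n$ of the $2^m$ cubes, so H\"older in $k$ with exponents $(p,p')$ combined with the embedding $\Fspt \hookrightarrow \B^s_{p,p}$ (valid for $\theta<p$) yields a bound of order $n^{-s}\,M^{(d-1)/p'}\,2^{-s\Lambda}\,\|f\|_{\Fspt}$. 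On the near-critical range I use the pointwise $\Fspt$-characterization and a slab-by-slab counting of occupied cubes to produce a bound of order $n^{-s}\,\Lambda\cdot M^{d-2}\,\|f\|_{\Fspt}$. The main obstacle is tuning these two estimates against each other: the choice $\Lambda \asymp s^{-1}\log M$ in case~(i) (respectively $\Lambda \asymp \log M$ at the endpoint $s = 1/\theta$) produces the factor $M^{(d-1)(1-s)}$ and, at the endpoint, the additional $(\log\log n)^{1-s}$ coming from a harmonic sum.
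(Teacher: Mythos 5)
Your proposal takes a genuinely different route from the paper, but there is a gap at its foundation. The paper never decomposes $f$ into a hyperbolic-cross projection $P_M f$ plus a remainder; it works directly with the error formula \eqref{eq:error} obtained from Poisson's summation formula for the compactly supported $f$, inserts Calder\'on's reproducing formula on the Fourier side, and then applies H\"older's inequality in a different order than in the Besov case. The entire small-smoothness phenomenon is handled by splitting the scales in the \emph{cubature} factor $\wt\Lambda_{m,n}$ at threshold $r_n+L_n$ with $L_n=(d-1)\log\log n$, switching from the $\ell_{\theta'}$-norm to the $\ell_{p'}$-norm across the threshold. The function $f$ itself is never decomposed spatially.

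The gap in your proposal is the claim that $(Q_n-I)(P_M f)=0$. For $Q_n(g)=\frac1n\sum_{x\in\X_n\cap[0,1)^d}g(x)$ to agree with $I(g)$ one needs to pass to the \emph{full} lattice sum $\det(T_n)\sum_{x\in\X_n}g(x)$, which is legitimate only when $g$ is supported in $[0,1]^d$. But $P_M f$ --- whether defined via the standard Fourier Littlewood--Paley projection or via $\sum_{|m|_1\le M}\Lambda_m\ast\Psi_m\ast f$ --- has compactly supported Fourier transform and therefore cannot be supported in $[0,1]^d$. The truncated Frolov sum over $\X_n\cap[0,1)^d$ then does not reproduce $I(P_M f)$, and your identity $(Q_n-I)(f)=Q_n(f-P_Mf)-I(f-P_Mf)$ breaks down. (A second, minor, point: with a hyperbolic-cross projection that includes the zero-frequency block one has $I(f-P_Mf)=0$ automatically, so the estimate you give for that term is vacuous.) There is also a quantitative mismatch: with the threshold $\Lambda\asymp s^{-1}\log M$ your tail estimate produces $n^{-s}M^{(d-1)/p'-1}$, which equals the target $n^{-s}M^{(d-1)(1-s)}$ only for the special value $s=1/p+1/(d-1)$; the near-critical estimate has a similar parameter-dependent defect. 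The paper's threshold $L_n=(d-1)\log\log n$ together with the $\ell_{\theta'}\to\ell_{p'}$ switch is tuned so that the exponents match for all admissible $(p,\theta,s)$; you may want to re-derive the exponents from that choice rather than from $\Lambda\asymp s^{-1}\log M$.
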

\noindent Concerning matching lower bounds there is so far 
only hope for $e(Q_n, \Fo)$. 
It is an interesting open problem to ask the same question for $\mbox{Int}_n(\Fspt)$ in 
the critical range of parameters. In any case, we strongly conjecture that the given convergence rates are sharp. This
would be another characteristic of a structural difference between the $\mathbf{B}$ and $\mathbf{F}$-spaces, recently
observed by Seeger and T. Ullrich \cite{SU15, SeUl15_2}, in the mentioned parameter domain.

Spaces of the above type have a long history in the former Soviet Union, 
see \cite{Am76, Nik75, ScTr87, Te93} and the references therein. 
The scale of spaces $\Bspt$ contains two important special cases of spaces 
with mixed smoothness: the H\"older-Zygmund spaces $(p=\theta=\infty)$ and 
the classical Nikol'skij spaces $(\theta=\infty)$. 
Note that Sobolev spaces $\Wsp$ with integrability $1<p<\infty$, $p\neq2$ and $s>0$ 
are not contained in the Besov scale. 
They represent special cases of Triebel-Lizorkin spaces $\Fspt$ 
if $\theta=2$. The upper bounds in Theorem~\ref{thm:error} were already 
obtained for 
\begin{itemize}
	\item $\Wsp(\mathbb{T}^2)$, $s>1/p$, see \cite[Thm.\ IV.2.1 \& IV.2.5]{Te93} and the references therein, 
	\item ${\mathbf B}^s_{p,\infty}(\mathbb{T}^2)$, $s>1/p$, see \cite[Thm.\ IV.2.6]{Te93}, and the
references therein, 
	\item $\Bspt(\mathbb{T}^2)$ for $s>1/p$, see \cite{TU14} and \cite{DU14}\,,
	\item $\Bspt([0,1]^2)$ for $1/p<s<1+1/p$, see \cite{TU14} and \cite{DU14},

	\item $\Bspt([0,1]^d)\cap\{f\colon f(x)=0 \text{ if }x_i=1 \text{ for some } i\}$ 
				for $s<1$, see \cite{Ma13}\,,
	\item $\Fspt([0,1]^d)\cap\{f\colon f(x)=0 \text{ if }x_i=1 \text{ for some } i\}$ 
				for $\max\{1/p,1/\theta\}<s<1$, see \cite{Ma13}\,,
	\item $\Wo([0,1]^d)$ for $s\in\N$, see~\cite{Fr76,Sk94},
	\item $\Wsp(\mathbb{T}^d)$ for $2\leq  p <\infty$ and $s\ge1$, see \cite[Thm.\ IV.4.4]{Te93}, 
  \item ${\mathbf B}^s_{p,\infty}(\mathbb{T}^d)$ if $s>1$ and $1<p\leq \infty$, see \cite[Thm.\ IV.4.6]{Te93}, and
	\item ${\mathbf B}^s_{p,\theta}(\mathbb{T}^d)$ for $1\le p,\theta\le
\infty$ and $s>1/p$, see~\cite{Du,Du2}; $1/p<s<2$, see \cite{HiMaOeUl14}.
\end{itemize}

The last four results were obtained by using Frolov(-type) cubature rules.
The other bounds are achieved for cubature formulas that use (digital) nets and 
the Fibonacci lattice rule, respectively, where the latter is 
restricted to $d=2$. For the Besov spaces with $d>2$ there are also some (not optimal) upper bounds 
by Triebel~\cite{Tr10} for $1/p<s<1+1/p$ using integration nodes from Smolyak grids or 
by Temlyakov \cite{Te86} using quasi-Monte Carlo lattice rules of Korobov type. 
Additionally, some results for Triebel-Lizorkin spaces can be easily obtained by 
embedding into $\Bspt$, see Lemma~\ref{emb} below. Concerning matching lower bounds for Sobolev spaces $\Wsp$ we
refer to \cite{Te90, Te94}, and for Besov spaces $\Bspt$ in case $s>1/p$
and $1\leq p,\theta \leq \infty$ to the recent paper \cite{DU14}. This will be complemented in Section \ref{sec:lower}
for the quasi-Banach case $\min\{p,\theta\}<1$. It shows that there will be no additional gain in the convergence
rate when passing to the situation $p<1$, an observation particularly relevant for the above mentioned kink functions.
In fact, we will present asymptotically optimal results for 
\begin{enumerate}
	\item[1)] $\mbox{Int}_n\bigl(\Ao\bigr)$ in the quasi-Banach cases $\min\{p,\theta\}<1$,
see~Section~\ref{subsec:quasi}, and
	\item[2)]$\mbox{Int}_n\bigl(\mathring{\A}^{1/p}_{p,\theta}\bigr)$ in the limiting case ($s=1/p$),
see~Section~\ref{subsec:limit}. 
\end{enumerate}

\noindent
Note that the results in Section~\ref{subsec:limit} require additional assumptions on 
$\theta$ to assure continuity of the functions. For the precise statement consider the 
mentioned section.

In addition, it is worth mentioning, that only recently Krieg and Novak~\cite{KN16} designed a Monte
Carlo algorithm that is based on the Frolov cubature rule and proved that this algorithm gains the order $1/2$ in the
main rate compared to the deterministic version studied in this paper. 

%
%
The paper is organized as follows. In Section \ref{sec:frolov} we introduce the Frolov cubature 
rule in detail. In addition, we give a direct construction of the Frolov lattice matrix without 
matrix inversion. Afterwards, in Section \ref{Prel}, we collect several tools from harmonic 
analysis. 
In particular, Poisson's summation formula and Calderon's reproducing formula 
in connection with local mean characterizations of function spaces turn out to be 
crucial for the error analysis of the cubature formula. 
The function spaces of interest will be introduced in 
Section~\ref{sec:spaces}. 
In Sections~\ref{sec:results}~and~\ref{sec:quasi} we prove the upper bounds 
for Frolov's cubature rule in the spaces $\Ao$ for the Banach and the quasi-Banach situation, 
respectively.
The matching (except for the case of small smoothness) lower bounds will be proven in Section~\ref{sec:lower}.

{\bf Notation.} As usual $\N$ denotes the natural numbers, $\N_0=\N\cup\{0\}$, 
$\Z$ denotes the integers, 
$\R$ the real numbers, 
and $\C$ the complex numbers. By $\mathbb{T}$ 
we denote the torus represented by the interval $[0,1]$, where the end points are identified.
The letter $d$ is always reserved for the underlying dimension in $\R^d, \Z^d, \tor$ etc. We denote
with $\langle x,y\rangle$ or $x y$ the usual Euclidean inner product in $\R^d$ and $\C^d$. 
For $a\in \R$ we denote $a_+ := \max\{a,0\}$ and $a=\lfloor a\rfloor+\{a\}$ with 
$\lfloor a\rfloor\in\Z$ and $0\le\{a\}<1$. 
For $0<p\leq \infty$ and $x\in \R^d$ we denote $|x|_p = (\sum_{i=1}^d |x_i|^p)^{1/p}$ with the
usual modification in the case $p=\infty$. 
By $(x_1,\ldots,x_d)>0$ we mean that each coordinate is positive and $\lfloor x\rfloor$ 
(resp.~$\{x\}$) are meant component-wise. 
By $\mathbb{T}$ we denote the torus represented by the interval $[0,1]$.
If $X$ and $Y$ are two (quasi-)normed spaces, the (quasi-)norm
of an element $x$ in $X$ will be denoted by $\|x\|_X$. 
The symbol $X \hookrightarrow Y$ indicates that the identity operator is continuous. 
For two sequences of real numbers $a_n$ and $b_n$ we will write 
$a_n \lesssim b_n$ if there exists a constant $c>0$ such that 
$a_n \leq c\,b_n$ for all $n$. We will write $a_n \asymp b_n$ if 
$a_n \lesssim b_n$ and $b_n \lesssim a_n$.

\goodbreak

\section{Frolov's construction}\label{sec:frolov}

In this section we introduce the matrix $T$ that is used in the construction 
of the lattice $\X_n$ for the cubature rule $Q_n$ from \eqref{eq:frolov}.
We follow to a large extend the original work of Frolov~\cite{Fr76}, 
see also~\cite{Te93,MU14}. Let
\begin{equation} \label{eq:poly}
P_d(t) \,:=\, \prod_{j=1}^d \bigl(t-2j+1\bigr) -1 
\,=\, \prod_{j=1}^d (t-\xi_j), \qquad t\in\R, 
\end{equation}
for suitable $\xi_1,\dots,\xi_d\in\R$, i.e.~the $\xi_j$ are the roots of $P_d$.
It is easy to see that the $\xi_j$ are indeed real numbers and all different.
Now define the matrix
\begin{equation}\label{eq:T}
\wt T \,=\, \bigl(\xi_i^{j-1}\bigr)_{i,j=1}^d
\end{equation}
and let $T_n=\bigl(n\det(\wt T)\bigr)^{-1/d}\,\wt T$ such that $\det(T_n)=1/n$. 
The lattice for the cubature formula $Q_n$ is given by $\X_n=T_n(\Z^d)$.

Later we will see that the error of the cubature formula $Q_n$ applied 
to some function $f$ is given by a sum over function evaluation of the 
Fourier transform $\F f$ at the points $B_n(\Z^d)$, where $B_n$ is 
given by 
\begin{equation}\label{eq:B}
B_n \,=\, \bigl(T_n^{-1}\big)^\top.
\end{equation}
The lattice $B_n(\Z^d)$ is usually called the \emph{dual lattice} of $\X_n$.
The following result contains the most important property of $B_n$.

\begin{lem}\label{lem:B}
Let $B_n$ be as above. Then, for each $z\in B_n(\Z^d)\setminus\{0\}$, we have 
\begin{equation}\label{hyp}
\prod_{j=1}^d|z_j| \,\gtrsim\, n.
\end{equation}
\end{lem}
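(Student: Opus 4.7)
The plan is to exploit the algebraic structure of $P_d$. Since $P_d$ is monic with integer coefficients and, as established by Frolov, irreducible over $\mathbb{Q}$, the roots $\xi_1,\dots,\xi_d$ satisfy symmetric-function identities with integer values. I will show that, up to the normalization $c_n^{-d}=n\det\wt T$ (where $c_n:=(n\det\wt T)^{-1/d}$), the product $\prod_i z_i$ equals a ratio of two symmetric expressions in the $\xi_i$, whose numerator is a nonzero integer and whose denominator is a nonzero constant depending only on $d$.

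First, I would reduce the claim to a Vandermonde system. Writing $z=B_n m$ with $m\in\Z^d\setminus\{0\}$ and setting $w:=c_n z$, the identity $B_n=c_n^{-1}(\wt T^{\top})^{-1}$ yields $\wt T^{\top}w=m$, i.e.
\[
\sum_{i=1}^d w_i\,\xi_i^{\,j-1}\;=\;m_j,\qquad j=1,\dots,d.
\]
It then suffices to prove $|\prod_i w_i|\gtrsim 1$ uniformly in $m\neq 0$, since $\prod_i|z_i|=c_n^{-d}\prod_i|w_i|=n\,(\det\wt T)\cdot\prod_i|w_i|\gtrsim n$.

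Next, I would solve this system explicitly by Lagrange interpolation. For each fixed $i_0$, multiplying the $j$-th equation by the coefficient of $t^{j-1}$ in
\[
Q_{i_0}(t)\;:=\;\prod_{k\neq i_0}(t-\xi_k)\;=\;\frac{P_d(t)}{t-\xi_{i_0}}
\]
and summing over $j$ produces, via $Q_{i_0}(\xi_i)=\delta_{i,i_0}P_d'(\xi_{i_0})$, the formula $w_{i_0}\,P_d'(\xi_{i_0})=R_m(\xi_{i_0})$, where $R_m\in\Z[t]$ has degree $\leq d-1$ and depends linearly on $m$. The point is that the coefficients of $Q_{i_0}(t)$ in powers of $t$ are polynomials in $\xi_{i_0}$ with integer coefficients, read off from the long division of $P_d$ by $t-\xi_{i_0}$; since these polynomials have pairwise distinct degrees $0,1,\dots,d-1$, they form a basis of $\mathbb{Q}[t]_{<d}$, so the linear map $m\mapsto R_m$ is injective and $R_m\ne 0$ whenever $m\ne 0$.

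Taking the product over $i_0$ then gives
\[
\prod_{i=1}^d w_i\;=\;\frac{\prod_{i=1}^d R_m(\xi_i)}{\prod_{i=1}^d P_d'(\xi_i)}.
\]
Both products are symmetric polynomials in $\xi_1,\dots,\xi_d$ with integer coefficients, hence by the fundamental theorem of symmetric polynomials they are polynomials in the elementary symmetric functions of the $\xi_i$ (equivalently, in the integer coefficients of $P_d$), and so are themselves integers. The denominator is a nonzero constant depending only on $d$. The numerator equals, up to sign, the resultant $\operatorname{Res}(P_d,R_m)$, which is nonzero because $P_d$ is irreducible over $\mathbb{Q}$ of degree $d$ while $R_m\neq 0$ has degree $<d$, forcing $\gcd(P_d,R_m)=1$ in $\mathbb{Q}[t]$. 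Consequently $|\prod_i w_i|\geq |\prod_i P_d'(\xi_i)|^{-1}\gtrsim 1$, which gives \eqref{hyp}. The main conceptual step is recognizing that the Lagrange-interpolation representation of $w_i$ yields an integer-polynomial numerator, so that the resultant/symmetric-function argument applies; once this is set up the proof is elementary, and the only nontrivial external input is the irreducibility of $P_d$, which I would simply cite from Frolov's paper.
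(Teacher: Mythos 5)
Your proof is correct, and it takes a genuinely different route from the paper's. The paper treats the admissibility bound $\big|\prod_j(\wt T m)_j\big|\ge 1$ as a black box (citing \cite{Te93,MU14}) and then transfers it to $B_n=(T_n^{-1})^\top$ via the structural identity $\wt T^{-1}=H\wt T^\top D^{-1}$ for Vandermonde matrices (from \cite{LR04}), where $H$ is an integer Hankel matrix with $\det H=-1$ and $D$ is diagonal with $\det D=(\det\wt T)^2$; the clean point is that $H^\top(\Z^d)\subset\Z^d$, so the bound for $\wt T$ carries over directly. You instead prove the bound for the transposed inverse from scratch: solving $\wt T^\top w=m$ by Lagrange interpolation gives $w_{i_0}P_d'(\xi_{i_0})=R_m(\xi_{i_0})$ with $R_m\in\Z[t]$ of degree $<d$, depending injectively and linearly on $m$ because the coefficient polynomials $b_j(\xi)$ arising from the division $P_d(t)/(t-\xi)$ have distinct degrees; then $\prod_i w_i=\operatorname{Res}(P_d,R_m)/\prod_i P_d'(\xi_i)$ is a nonzero integer over the discriminant, giving the uniform lower bound. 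Your argument is self-contained and makes the number-theoretic mechanism (irreducibility $\Rightarrow$ nonvanishing resultant $\Rightarrow$ integer gap) explicit, essentially re-deriving the admissibility of $\wt T$ along the way rather than invoking it; the paper's proof is shorter but rests on two external facts. One tiny bookkeeping remark: since $\det\wt T$ may be negative, write $|c_n|^{-d}=n\,|\det\wt T|$ when passing from $\prod|w_i|$ to $\prod|z_i|$, but this does not affect the conclusion.
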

\bproof
First note that it is known that $|\prod_{j=1}^d(\wt Tm)_j|\ge1$ for 
each $m\in\Z^d\setminus\{0\}$ with $\wt T$ from \eqref{eq:T}, see~\cite[Lemma~IV.4.3]{Te93} 
or \cite[Lemma~8]{MU14}. 
Since $\wt T$ is a Vandermonde matrix we can write $\wt T^{-1}$ as the 
product $H \wt T^\top D^{-1}$, where $H$ is a Hankel matrix with integer 
entries and $\det(H)=-1$, and $D$ is a diagonal matrix with $\det(D)=\det(\wt T)^{2}$, 
see~\cite[Section~4]{LR04}.
Hence, 
\[\begin{split}
\inf_{m\in\Z^d\setminus\{0\}}\, \prod_{j=1}^d |(B_n m)_j|
\,&=\, n \det(\wt T)\,\inf_{m\in\Z^d\setminus\{0\}}\, \prod_{j=1}^d |(D^{-1}\wt T H^\top m)_j| \\
\,&=\, \frac{n}{\det(\wt T)}\,\inf_{m\in\Z^d\setminus\{0\}}\, \prod_{j=1}^d |(\wt T H^\top m)_j| 
\,\ge\, \frac{n}{\det(\wt T)}\,\inf_{m\in\Z^d\setminus\{0\}}\, \prod_{j=1}^d |(\wt T m)_j| \\
\,&\ge\, \frac{n}{\det(\wt T)},
\end{split}\]
where we have used that $H^\top(\Z^d)\subset\Z^d$.
\eproof

Note that lattices which satisfy the conclusion of Lemma~\ref{lem:B} 
satisfy $|\X_n\cap[0,1)^d|= n + O\bigl(\ln^{d-1}(n)\bigr)$, 
see \cite{Sk94}.

\begin{rem}\label{rem:admissible}
The original construction of the lattice $\X_n$ of Frolov~\cite{Fr76}, 
see also~\cite{Te93}, uses the generator $n^{-1/d} B_1$ instead of $T_n$. 
As it turns out we can use \eqref{eq:T} directly and do
not have to invert the matrix $\wt T$ for the construction of the nodes.
However, this ``trick'' only works if the generator $\wt T$ is 
(a multiple of) a Vandermonde matrix.
\end{rem}
The crucial property of the matrix $B_n$, which we will need in the sequel, is reflected best
by the numbers 
\begin{equation}\label{eq:Z}
Z_n(m) \,=\, \abs{(B_n(\Z^d)\setminus\{0\})\,\cap\, I_m},
\end{equation}
with 
\begin{equation}\label{eq:dyadic}
I_m:=\{x\in\R^d\colon C_1 \lfloor2^{m_j-1}\rfloor \le |x_j|< C_2\, 2^{m_j} 
\text{ for } j=1,\dots,d\}\,,\quad m\in\N_0^d,
\end{equation}
with constants $0<C_1\le C_2<\infty$ independent of $m$. The numbers $Z_n(m)$ denotes the number of points from
$B_n(\Z^d)$ excluding $0$ in 
(the union of at most $2^d$) axis-parallel boxes with volume of approximately $2^{|m|_1}$. The following estimates for
$Z_n(m)$ are a direct consequence of \eqref{hyp} and the lattice structure of $B_n(\Z^d)$. For a proof we refer to
\cite[IV.4]{Te93} or \cite{MU14}.

\begin{lem} \label{lem:Z}
There exists an absolute constant $c<\infty$ such that with 
\begin{equation}\label{eq:rn}
r_n:=\log_2(n)-c
\end{equation}
the numbers $Z_n(m)$ from~\eqref{eq:Z}, $n\in\N$, $m\in\N_0^d$, satisfy
\begin{enumerate}[(i)]
	\item $Z_n(m)\,=\,0$,\qquad\qquad if $|m|_1\le r_n$, and
	\item $Z_n(m)\,\lesssim\, 2^{|m|_1}/n$,\quad otherwise.
\end{enumerate}
\end{lem}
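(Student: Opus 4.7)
The plan is to prove each part by direct lattice-geometry arguments built on the admissibility property \eqref{hyp}.

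For part (i), I would read off the conclusion from a single inequality. If $z \in B_n(\Z^d)\setminus\{0\}$ lies in $I_m$, then by the definition of $I_m$ one has $|z_j| < C_2\cdot 2^{m_j}$ for each $j$, hence $\prod_{j=1}^d |z_j| < C_2^d\cdot 2^{|m|_1}$. On the other hand, Lemma~\ref{lem:B} supplies a constant $c_0 > 0$ with $\prod_{j=1}^d |z_j| \ge c_0\, n$ for every nonzero $z \in B_n(\Z^d)$. These two bounds are incompatible as soon as $2^{|m|_1} \le (c_0/C_2^d)\, n$; choosing $c := \lceil\log_2(C_2^d/c_0)\rceil$ makes the threshold $r_n = \log_2(n) - c$ the critical exponent, and so $Z_n(m) = 0$ whenever $|m|_1\le r_n$.

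For part (ii), I would reduce the counting to a fixed admissible lattice of unit covolume. Since $I_m$ is a disjoint union of at most $2^d$ sign-orthant boxes all congruent to $J_m^+ := \prod_{j=1}^d [C_1\, 2^{m_j-1}, C_2\, 2^{m_j})$, it suffices to bound $|B_n(\Z^d)\cap J_m^+|$. Writing $B_n = n^{1/d}\, M$ with $M$ a fixed matrix of determinant $1$ (which follows from $|\det B_n|=n$ and the Vandermonde structure in \eqref{eq:T}), the count equals $|M(\Z^d)\cap n^{-1/d} J_m^+|$, a box with edge lengths $R_j := (C_2-C_1/2)\cdot 2^{m_j}/n^{1/d}$ and product $\prod_j R_j\asymp 2^{|m|_1}/n$. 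The admissibility \eqref{hyp} transfers to $M(\Z^d)$ as $\prod_j |w_j| \ge c_0$ for every nonzero $w\in M(\Z^d)$. The standard counting bound for axis-parallel boxes against an admissible unit-covolume lattice (cf.~\cite{MU14} and \cite[\S IV.4]{Te93}) then yields $|M(\Z^d)\cap n^{-1/d} J_m^+| \lesssim 1 + \prod_j R_j$. Since $|m|_1 > r_n$ forces $2^{|m|_1}/n \gtrsim 2^{-c}$, the additive constant is absorbed into the main term and we conclude $Z_n(m)\lesssim 2^{|m|_1}/n$.

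The main obstacle I expect is the volume bound for admissible lattices invoked in the last step: a naive fundamental-parallelepiped argument breaks down whenever the box $n^{-1/d} J_m^+$ is short in some direction, since the fundamental domain of $M(\Z^d)$ need not then fit inside. What rescues the estimate is precisely the admissibility $\prod|w_j|\ge c_0$, which prevents nonzero lattice points from clustering in thin slabs and is strong enough to give the correct count for a single dyadic layer $I_m$ \emph{without} the $|m|_1^{d-1}$ logarithmic loss that would appear if one summed the counts over the full hyperbolic cross of sidelength $2^{|m|_1}$.
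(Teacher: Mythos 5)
Your argument is correct. Note that the paper does not actually prove Lemma~\ref{lem:Z} itself; it explicitly refers to \cite[IV.4]{Te93} and \cite{MU14}, so there is no in-text proof to compare against line-by-line. Your reconstruction follows the standard route implicit in those references: part~(i) is the volume-versus-admissibility contradiction, and your choice $c=\lceil\log_2(C_2^d/c_0)\rceil$ is exactly what makes the threshold work, since $|m|_1\le r_n$ gives $\prod_j|z_j|<C_2^d 2^{|m|_1}\le c_0 n$, contradicting Lemma~\ref{lem:B}.

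For part~(ii), the one step you leave to the references, namely $|M(\Z^d)\cap J|\lesssim 1+\mathrm{vol}(J)$ for admissible unimodular lattices and axis-parallel boxes $J$, is in fact elementary given what you already have, and it may be worth spelling out since it is the heart of the matter. Order the lattice points $w^{(1)},\dots,w^{(N)}$ in $J$ by increasing first coordinate (these are distinct, else the difference would be a nonzero lattice vector with vanishing first coordinate and hence zero coordinate product, contradicting admissibility). For $i<j$, the nonzero vector $w^{(j)}-w^{(i)}\in M(\Z^d)$ satisfies $\prod_k|w^{(j)}_k-w^{(i)}_k|\ge c_0$ while $|w^{(j)}_k-w^{(i)}_k|\le R_k$ for $k\ge 2$, forcing $|w^{(j)}_1-w^{(i)}_1|\ge c_0/\prod_{k\ge 2}R_k$. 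Hence the first coordinates are $\bigl(c_0/\prod_{k\ge 2}R_k\bigr)$-separated in an interval of length $R_1$, giving $N\le 1+\prod_k R_k/c_0$. This is precisely the point you flagged as the "main obstacle": admissibility substitutes for the failed fundamental-domain argument on thin boxes and gives the count for a single dyadic layer $I_m$ with no logarithmic loss. Two cosmetic remarks: when $m_j=0$ the lower face of $I_m$ in direction $j$ is $C_1\lfloor 2^{m_j-1}\rfloor=0$, not $C_1 2^{m_j-1}$, so that factor is the full interval $(-C_2,C_2)$ rather than two sign boxes (harmless for the asymptotics); and the separation argument above works for any axis-parallel box, so the sign-orthant decomposition is a convenience rather than a necessity.
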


\begin{rem}
The choice of the polynomial \eqref{eq:poly} is quite flexible. 
One could replace $P_d$ by every irreducible (over $\mathbb{Q}$) polynomial 
with integer coefficients that has $d$ different real roots.
Another example, for $d=2^\ell$, is the Chebychev-type polynomial 
$P_{2^\ell}(t)=2 \cos(2^\ell\arccos(t/2))$, see \cite[p,~242]{Te93}. 
The roots of these polynomials are given by 
$\xi_i=2\cos(\pi(2i-1)/2^{n+1})$, $i=1,\dots,d$.
\end{rem}

\begin{rem}
It is worth noting that the results of this paper do not rely on the specific 
construction. 
In fact, every lattice for which the dual lattice satisfies 
the bound of Lemma~\ref{lem:B} would lead to the same results. 
See \cite{Sk94} for a detailed study of such lattices.
\end{rem}

\goodbreak

\section{Tools from Fourier analysis}
\label{Prel}

\subsection{Preliminaries}
\noindent
Let $L_p=L_p(\R^d)$, $0 < p\le\infty$, be the space of all measurable functions 
$f:\R^d\to\C$ such that 
\[
\|f\|_p := \Big(\int_{\R^d} |f(x)|^p \dint x \Big)^{1/p} < \infty
\]
with the usual modification if $p=\infty$. In addition, we denote by $C(\R^d)$
the space of all bounded and continuous complex-valued functions on $\R^d$.

We will also need $L_p$-spaces on compact domains $\Omega\subset\R^d$ 
instead of $\R^d$. 
We write $\|f\|_{L_p(\Omega)}$ for the corresponding (restricted) $L_p$-norm. 
For $f\in L_1(\R^d)$ we define the Fourier transform
\[
\F f(\xi) 
\,=\, \int_{\R^d} f(y) e^{-2\pi i \langle \xi, y\rangle} \dint y, \qquad \xi\in\R^d,  
\]
and the corresponding inverse Fourier transform $\F^{-1}f(\xi)=\F f(-\xi)$.
Additionally, we define the spaces of 
continuous functions $C(\R^d)$, infinitely differentiable functions $C^\infty(\R^d)$ 
and infinitely differentiable functions with compact support $C^\infty_0(\R^d)$ 
as well as 
the \emph{Schwartz space} $\S=\S(\R^d)$ of 
all rapidly decaying infinitely differentiable functions on $\R^d$, i.e.,  
\[
\S := \bigl\{\varphi\in C^{\infty}(\R^d)\colon \|\varphi\|_{k,\ell}<\infty 
\;\text{ for all } k,\ell\in\N\bigr\}\,,
\]
where 
$$
    \|\varphi\|_{k,\ell}:=\Big\|(1+|\cdot|)^k\sum\limits_{m=0}^{\ell}|\varphi^{(m)}(\cdot)|\Big\|_{\infty}\,.
$$

The space $\mathcal{S}'(\R^d)$, the topological dual of $\mathcal{S}(\R^d)$, is also referred to as the set of tempered
distributions on $\R^d$. Indeed, a linear mapping $f:\mathcal{S}(\R^d) \to \C$ belongs
to $\mathcal{S}'(\R^d)$ if and only if there exist numbers $k,\ell \in \N$
and a constant $c = c_f$ such that
\begin{equation}\label{eq100}
    |f(\varphi)| \leq c_f\|\varphi\|_{k,\ell}
\end{equation}
for all $\varphi\in \mathcal{S}(\R^d)$. The space $\mathcal{S}'(\R^d)$ is
equipped with the weak$^{\ast}$-topology.
The convolution $\varphi\ast \psi$ of two
square-integrable
functions $\varphi, \psi$ is defined via the integral
\begin{equation*}\label{conv}
    (\varphi \ast \psi)(x) = \int_{\R^d} \varphi(x-y)\psi(y)\,dy\,.
\end{equation*}
If $\varphi,\psi \in \mathcal{S}(\R^d)$ then $\varphi \ast \psi$ still belongs to
$\mathcal{S}(\R^d)$. 
In fact, the convolution operator can be extended to $\mathcal{S}(\R^d)\times L_1$, 
in which case
we have $\varphi \ast \psi\in\S(\R^d)$, and to 
$\mathcal{S}(\R^d)\times \mathcal{S}'(\R^d)$ via
$(\varphi\ast f)(x) = f(\varphi(x-\cdot))$. It makes sense point-wise and is 
a $C^{\infty}$-function in $\R^d$.
As usual, the Fourier transform can be extended to $\mathcal{S}'(\R^d)$
by $(\cf f)(\varphi) := f(\cf \varphi)$, where
$\,f\in \mathcal{S}'(\R^d)$ and $\varphi \in \mathcal{S}(\R^d)$. 
The mapping $\cf:\S'(\R^d) \to \S'(\R^d)$ is a bijection.

\subsection{Periodization and Poisson's summation formula}

The analysis of the error of cubature formulas that use nodes from a lattice 
is naturally related to 
an application of Poisson's summation formula and variations thereof, 
see~\eqref{eq:error}.
A more detailed treatment and a proof of the following theorem 
can be found; e.g., in~\cite[Thm.~VII.2.4 \& Cor.~VII.2.6]{SW71}.

\begin{thm}\label{thm:periodization}
Let $f\in L_1(\R)$. Then its periodization $\sum_{\ell\in\Z^d} f(\ell+\cdot)$ converges 
in the norm of $L_1([0,1]^d)$. The resulting (1-periodic) function in 
$L_1([0,1]^d)$ has the formal Fourier expansion
\[
\sum_{k\in\Z^d} \F f(k)\, e^{2\pi i k x}.
\]
Moreover, if $f\in\S(\R^d)$, then both sums converge absolutely, and 
hence point-wise.
\end{thm}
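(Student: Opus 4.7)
The plan is to carry out three standard steps. First I would establish $L_1$-convergence of the periodization by tiling $\R^d$ with the translates $\ell+[0,1)^d$ of the fundamental domain and invoking Tonelli's theorem, which gives
\begin{equation*}
\|f\|_{L_1(\R^d)} \,=\, \sum_{\ell\in\Z^d}\int_{[0,1]^d}|f(\ell+x)|\dint x.
\end{equation*}
The absolute summability of the $L_1$-norms on the right immediately implies that the symmetric partial sums $S_N(x):=\sum_{|\ell|_\infty\le N}f(\ell+x)$ form a Cauchy sequence in $L_1([0,1]^d)$, so the periodization $g(x):=\sum_{\ell\in\Z^d}f(\ell+x)$ defines a $1$-periodic element of $L_1([0,1]^d)$, with pointwise a.e.\ convergence along the way.

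Second, I would compute the Fourier coefficients of $g$. Using the $L_1$-convergence just established to exchange sum and integral, together with the lattice invariance $e^{-2\pi i\ls k,\ell\rs}=1$ for $k,\ell\in\Z^d$, one unfolds as
\begin{equation*}
\widehat{g}(k) \,=\, \int_{[0,1]^d}\!\sum_{\ell\in\Z^d}f(\ell+x)\,e^{-2\pi i\ls k,x\rs}\dint x
\,=\, \sum_{\ell\in\Z^d}\int_{\ell+[0,1]^d}\!\! f(y)\,e^{-2\pi i\ls k,y\rs}\dint y
\,=\, \F f(k).
\end{equation*}
Hence the formal Fourier expansion of $g$ is precisely $\sum_{k\in\Z^d}\F f(k)\,e^{2\pi i\ls k,x\rs}$, as claimed.

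Third, for $f\in\S(\R^d)$ both $f$ and $\F f$ are Schwartz, so they satisfy rapid decay estimates of the form $|f(x)|+|\F f(x)|\lesssim (1+|x|_\infty)^{-d-1}$. Consequently both series $\sum_\ell f(\ell+x)$ and $\sum_k\F f(k)\,e^{2\pi i\ls k,x\rs}$ converge absolutely and uniformly in $x$. The uniform limit of the Fourier series is continuous and $1$-periodic, and by the second step its Fourier coefficients coincide with those of the (also continuous) periodization $g$; uniqueness of Fourier coefficients on $\mathbb{T}^d$ then forces the two functions to agree pointwise. I do not expect any real obstacle; the only subtle point is the interchange of summation and integration in the coefficient computation, but this is fully justified by applying Tonelli/Fubini to $|f(y)|$ and the boundedness of the exponential.
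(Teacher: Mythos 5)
The paper does not give a proof of this theorem at all; it simply cites Stein--Weiss \cite[Thm.~VII.2.4 \& Cor.~VII.2.6]{SW71}. Your self-contained argument is correct and is essentially the standard textbook proof: Tonelli's theorem gives the $L_1$-convergence of the periodization, the unfolding $\int_{[0,1]^d}\sum_\ell = \sum_\ell\int_{\ell+[0,1]^d}$ (justified by the absolute summability just established) yields $\widehat{g}(k)=\F f(k)$, and the rapid Schwartz decay of both $f$ and $\F f$ gives absolute and uniform convergence of both series. One remark: your third step in fact proves more than the literal statement asks for. The theorem only claims that both series converge absolutely (``and hence point-wise'' just records that absolute convergence entails pointwise convergence); you additionally establish, via uniqueness of Fourier coefficients for continuous functions on $\tor$, that the two limits agree pointwise, i.e.\ the Poisson summation formula itself. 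That is harmless (and indeed what the paper needs later, in Corollary~\ref{cor:poisson}), but it is worth being aware that you silently strengthened the conclusion. A minor point of hygiene: the claim ``with pointwise a.e.\ convergence along the way'' is true but deserves a word --- monotone convergence applied to $\sum_\ell |f(\ell+x)|$ gives $\sum_\ell |f(\ell+x)|<\infty$ a.e., hence absolute a.e.\ convergence.
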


Note that absolute convergence in the above sums actually holds under 
weaker assumptions on $f$. However, this version is enough for our purposes.

In what follows, cf.~\eqref{eq:error}, we will need a point-wise version of Poisson's 
summation formula, like Theorem~\ref{thm:periodization}, in cases where we cannot 
guarantee that the sums converge absolutely. In such cases we have to specify 
in which sense (ordering) we understand convergence. This is provided by the corollary below.

\begin{cor}\label{cor:poisson}
Let $f\in L_1(\R^d)$ be continuous with compact support, 
$T\in\R^{d\times d}$ be an invertible matrix, and $B=(T^{-1})^\top$.
Furthermore, let $\phi_0\in C_0^\infty(\R)$ with $\phi_0(0)=1$ and define 
$\phi_j(t):=\phi_0(2^{-j}t)-\phi_0(2^{-j+1}t)$, $j\in\N$, $t\in\R$, as well as 
the (tensorized) functions $\phi_m(x):=\phi_{m_1}(x_1)\cdot\ldots\cdot\phi_{m_d}(x_d)$,
$m\in\N_0^d$, $x\in\R^d$. Then
\[
\det(T)\sum_{\ell\in \Z^d} f(T\ell) 
\,=\, \lim_{N\to\infty}\;\sum_{m\colon|m|_\infty\le N}\; \sum_{k\in\Z^d} 
\phi_m(Bk)\, \F f(B k).
\]
In particular, the limit on the right hand side exists.
\end{cor}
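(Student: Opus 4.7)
The plan is to recognize the partial sum on the right as a classical Poisson identity applied to a sequence of Schwartz approximations of $f$ and then to pass to the limit via an approximate-identity argument on the periodization of $f$.

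First, I would exploit the telescoping structure of the $\phi_j$: since $\phi_j(t)=\phi_0(2^{-j}t)-\phi_0(2^{-j+1}t)$ for $j\ge 1$, we have $\sum_{j=0}^{N}\phi_j(t)=\phi_0(2^{-N}t)$, so in tensorized form
$$
\Phi_N(x) \,:=\, \sum_{|m|_\infty\le N}\phi_m(x) \,=\, \prod_{i=1}^d \phi_0\bigl(2^{-N}x_i\bigr).
$$
Thus $\Phi_N\in C_0^\infty(\R^d)$, $\Phi_N(0)=1$, and $\Phi_N\to 1$ pointwise as $N\to\infty$. Writing $\psi(x):=\prod_i \phi_0(x_i)$, one has $\Phi_N(x)=\psi(2^{-N}x)$, hence $K_N:=\F^{-1}\Phi_N$ satisfies $K_N(y)=2^{Nd}(\F^{-1}\psi)(2^N y)$ with $\F^{-1}\psi\in\S(\R^d)$ and $\int K_N\dint y = \Phi_N(0)=1$. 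In particular, $(K_N)_{N\in\N}$ is a Schwartz approximate identity.

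Second, I would set $g_N:=K_N\ast f$. Since $f$ is continuous with compact support and $K_N\in\S(\R^d)$, $g_N\in\S(\R^d)$, and its Fourier transform equals $\Phi_N\cdot\F f$, a $C_0^\infty$ function (product of a $C_0^\infty$ cut-off and the entire function $\F f$). Applying the classical Poisson summation formula (Theorem~\ref{thm:periodization}) to $g_N$ after the linear change of variables $x\mapsto Tx$ to land on the lattice $T(\Z^d)$ yields
$$
\det(T)\sum_{\ell\in\Z^d} g_N(T\ell) \,=\, \sum_{k\in\Z^d}\F g_N(Bk) \,=\, \sum_{|m|_\infty\le N}\sum_{k\in\Z^d}\phi_m(Bk)\,\F f(Bk),
$$
where each inner sum over $k$ is finite because only finitely many lattice points meet the compact support of $\phi_m$.

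Third, it remains to show $\sum_\ell g_N(T\ell)\to \sum_\ell f(T\ell)$ as $N\to\infty$ (the right hand side is a finite sum by compact support of $f$). The periodization $F(y):=\sum_{\ell\in\Z^d} f(T\ell-y)$ is a locally finite sum of continuous functions, hence continuous, and is bounded by periodicity. Fubini, justified by the uniform-in-$y$ finiteness of $\sum_\ell|f(T\ell-y)|$, gives
$$
\sum_{\ell\in\Z^d} g_N(T\ell) \,=\, \int_{\R^d} K_N(y)\,F(y)\dint y \,=\, \int_{\R^d} (\F^{-1}\psi)(z)\,F(2^{-N}z)\dint z,
$$
and by dominated convergence the right side tends to $F(0)\int \F^{-1}\psi = F(0)=\sum_\ell f(T\ell)$. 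The main technical point is precisely this last step: the Fourier series $\sum_k \F f(Bk)$ need not be absolutely convergent, and one has no direct control over its partial sums; but replacing it by the lattice sum on the spatial side and introducing the periodization $F$ converts the delicate Fourier-side convergence question into the routine $C(\R^d)$-statement that a mollified continuous function evaluated at a point converges back to its value, which is what makes the proof go through.
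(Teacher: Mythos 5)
Your proof is correct. Each step checks out: the telescoping identity $\sum_{|m|_\infty\le N}\phi_m = \psi(2^{-N}\cdot)$ with $\psi=\prod_i\phi_0$ is exact; $K_N=\F^{-1}\Phi_N$ is a Schwartz kernel with $\int K_N=\Phi_N(0)=1$; $g_N=K_N\ast f$ has $\F g_N=\Phi_N\,\F f\in C_0^\infty$, so $g_N\in\S$ and the lattice Poisson summation formula applies with absolute convergence; the interchange $\sum_\ell g_N(T\ell)=\int K_N F$ is justified since $\sum_\ell|f(T\ell-y)|$ is uniformly bounded (finite overlap) and $K_N\in L_1$; and the rescaling followed by dominated convergence (using boundedness and continuity of the periodization $F$) gives $F(0)$.

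The paper reaches the same identity along a slightly different route: it observes that $\Phi_B=\prod_i\phi_0((B\cdot)_i)$ satisfies the hypotheses of the summability theorem \cite[Thm.~VII.2.11]{SW71}, applies that theorem to the continuous torus function $\det(T)\sum_\ell f(T(\ell+x))$ (whose Fourier coefficients are $\F f(Bk)$ by Theorem~\ref{thm:periodization}), evaluates at $x=0$, and only then uses the telescoping. You perform the telescoping first, mollify $f$ to the Schwartz function $g_N=K_N\ast f$, apply Poisson for Schwartz functions, and close with an approximate-identity argument directly on $\R^d$. In substance the two proofs are the same — both identify the symmetric partial sums with a summability method attached to the kernel $\phi_0$ and both reduce to an approximate-identity limit — but yours is self-contained in that it re-derives the summability statement rather than importing \cite[Thm.~VII.2.11]{SW71} as a black box, at the cost of a slightly longer argument. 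Nothing is missing; this is a valid alternative proof.
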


\bproof
The proof is based on \cite[Theorem~VII.2.11]{SW71}. We put $\Phi_B(\cdot):=\varphi_0(B\cdot)$ 
and note, that $\Phi_B$ satisfies the assumptions \cite[(2.10)]{SW71}, in particular 
$\Phi_B(0) = 1$. Moreover, $f$ is continuous and has compact support, which implies that 
the periodization $\det(T)\sum_{\ell\in \Z^d} f(T(\ell+x))$ belongs to $C(\tor)$. 
Applying \cite[Thm.~VII.2.11]{SW71} together with Theorem~\ref{thm:periodization} above
we obtain 
\[
\det(T)\sum_{\ell\in \Z^d} f(T(\ell+x)) 
\,=\, \lim_{N\to\infty} \sum_{k\in\Z^d} \Phi_B(2^{-N}k)\, \F f(B k)\, e^{2\pi i k x}
\]
in $C(\tor)$. Setting $x=0$ gives 
\begin{equation}\label{SW}
   \det(T)\sum_{\ell\in \Z^d} f(T\ell) = \lim_{N\to\infty} \sum_{k\in\Z^d} \Phi_B(2^{-N}k)\, \F f(B k).
\end{equation}
By construction we note that 
$$
    \Phi_B(2^{-N}k) = \phi_0(2^{-N}(Bk)_1)\cdot\ldots\cdot\phi_0(2^{-N}(Bk)_d) = \sum_{m\colon |m|_\infty\le N} \phi_m (Bk)\,.
$$
Plugging this into \eqref{SW} and interchanging the order of summation yields the desired result. \eproof

\medskip

To prove our main results we will also have to bound the norm of certain series of functions. 
In fact, we treat two different kinds of functions: 
The first bound in Lemma~\ref{lem:norm1} requires that the functions itself have compact support, 
while the second one in Lemma~\ref{lem:norm2} is for functions with compactly supported Fourier transform.

\begin{lem}\label{lem:norm1}
Let $B\in\R^{d\times d}$ be an invertible matrix and 
$\Omega\in\R^d$ be a bounded set. 
Furthermore, let $\{f_m\}_{m\in\N_0^d}\subset \S(\R^d)$ be functions with 
$\supp(f_m)\subset\Omega$ for all $m\in\N_0^d$
and define
\[
M_{B,\Omega} \,:=\, \abs{\{\ell\in\Z^d\colon (\ell+[0,1)^d)\cap B^\top(\Omega)\neq\varnothing\}}.
\] 
Then, for $1\le \theta, p\le\infty$, we have
\[
\Big\|\Big(\sum_{m\in\N_0^d}\Big|\sum_{\ell\in\Z^d}\F f_m(B \ell)\, \e^{2\pi i \ell \cdot}
	\Big|^\theta\Big)^{1/\theta} \Big\|_{L_{p}([0,1]^d)}
\,\le\, \Big(\frac{M_{B,\Omega}}{\det(B)}\Big)^{1-1/p}\, 
	\Big\|\Big(\sum_{m\in\N_0^d} |f_m|^\theta\Big)^{1/\theta} \Big\|_{p}.
\]
In particular, 
\[
\Big\|\sum_{\ell\in\Z^d}\F f(B \ell)\, \e^{2\pi i \ell \cdot}\Big\|_{L_{p}([0,1]^d)}
\,\le\, \Big(\frac{M_{B,\Omega}}{\det(B)}\Big)^{1-1/p}\, 
	\|f\|_{p}.
\]
for $f\in \S(\R^d)$ with $\supp(f)\subset\Omega$.
\end{lem}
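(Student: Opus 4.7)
The plan is to move the Fourier-side sum to a spatial periodization via Poisson's summation formula, then bound the resulting pointwise-finite sum with Minkowski's and Hölder's inequalities, and close by a change of variables. The only delicate point is justifying pointwise Poisson summation, which will be automatic because, after the substitution, the summands are Schwartz with compact support.

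Concretely, for each $m\in\N_0^d$ I would introduce
$$g_m(x) \,:=\, \det(B)^{-1}\,f_m(B^{-\top}x).$$
A direct change of variables yields $\F g_m(\xi)=\F f_m(B\xi)$, and clearly $\supp g_m\subset B^{\top}(\Omega)$. Since $g_m\in\S(\R^d)$ is compactly supported, Theorem~\ref{thm:periodization} applies pointwise and gives
$$h_m(x) \,:=\, \sum_{\ell\in\Z^d}\F f_m(B\ell)\,\e^{2\pi i\ell x} \,=\, \sum_{\ell\in\Z^d} g_m(\ell+x), \qquad x\in[0,1)^d.$$
For each such $x$ only finitely many terms on the right are nonzero: if $g_m(\ell+x)\neq 0$ then $\ell+x\in B^{\top}(\Omega)$, which forces $(\ell+[0,1)^d)\cap B^{\top}(\Omega)\neq\varnothing$, so at most $M_{B,\Omega}$ values of $\ell$ contribute.

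Next, set $G(y):=\bigl(\sum_{m}|g_m(y)|^\theta\bigr)^{1/\theta}$ and, for $x\in[0,1)^d$, let $S(x):=\{\ell\in\Z^d:\ell+x\in B^{\top}(\Omega)\}$, so that $|S(x)|\le M_{B,\Omega}$. Minkowski's inequality in $\ell^\theta$ (over $m$) followed by Hölder's inequality in $\ell^p$ (over the at most $M_{B,\Omega}$ nonzero $\ell$) gives
$$\Bigl(\sum_{m\in\N_0^d}|h_m(x)|^\theta\Bigr)^{1/\theta} \,\le\, \sum_{\ell\in S(x)} G(\ell+x) \,\le\, M_{B,\Omega}^{1-1/p}\Bigl(\sum_{\ell\in S(x)} G(\ell+x)^p\Bigr)^{1/p}.$$

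Finally, I raise to the $p$-th power and integrate over $[0,1)^d$; the $\ell$-shifts unfold the unit cube to all of $\R^d$, so
$$\Bigl\|\bigl(\sum_{m}|h_m|^\theta\bigr)^{1/\theta}\Bigr\|_{L_p([0,1]^d)}^{p} \,\le\, M_{B,\Omega}^{p-1}\int_{\R^d}G(y)^p\,\dint y \,=\, M_{B,\Omega}^{p-1}\,\|G\|_p^{p}.$$
A single change of variables $y=B^{\top}u$ converts $\|G\|_p$ to $\det(B)^{1/p-1}\bigl\|(\sum_{m}|f_m|^\theta)^{1/\theta}\bigr\|_{p}$, and extracting the $p$-th root produces exactly the claimed factor $(M_{B,\Omega}/\det(B))^{1-1/p}$. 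The endpoints $p=1$ (Hölder is not needed) and $p=\infty$ (replace integration by a supremum and use that at most $M_{B,\Omega}$ terms contribute) go through with the same argument, and the ``in particular'' statement is just the single-index specialization $f_m=f$.
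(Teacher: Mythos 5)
Your proof is correct and follows essentially the same route as the paper: you set $g_m = \det(T)\,f_m(T\cdot)$ with $T = B^{-\top}$ (the paper uses $h_m = f_m(T\cdot)$ and carries the Jacobian factor separately), invoke Poisson summation to turn the exponential sum into a periodization, then apply Minkowski in $\ell^\theta$ followed by Hölder over the at most $M_{B,\Omega}$ contributing shifts, unfold $[0,1)^d$ over $\Z^d$ to $\R^d$, and change variables back. The only cosmetic difference is where the $\det$ factor is kept; the constants and the treatment of the endpoints $p=1,\infty$ match the paper's.
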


\begin{proof}
Let $p'$ be given by $1/p'=1-1/p$. Let $h_m(x)=f_m(Tx)$ with $T=(B^{-1})^\top$
and note that $\supp(h_m)\subset B^\top(\Omega)$. Clearly, $\F f_m(B\ell)=\det(T)\F h_m(\ell)$ and, hence, by Theorem~\ref{thm:periodization} 
and H\"older's inequality 
we obtain
\[\begin{split}
&\Big\|\Big(\sum_{m\in\N_0^d}\Big|\sum_{\ell\in\Z^d}\F f_m(B \ell)\, \e^{2\pi i \ell \cdot}
	\Big|^\theta\Big)^{1/\theta} \Big\|_{L_{p}([0,1]^d)}
\,=\, \det(T)\,\Big\|\Big(\sum_{m\in\N_0^d}	\Big|\sum_{\ell\in\Z^d}h_m(\ell+\cdot) 
	\Big|^\theta\Big)^{1/\theta} \Big\|_{L_{p}([0,1]^d)} \\
&\qquad\qquad\qquad\le\, \det(T)\,\Big\|\sum_{\ell\in\Z^d}\Big(\sum_{m\in\N_0^d}	\Big|h_m(\ell+\cdot) 
	\Big|^\theta\Big)^{1/\theta} \Big\|_{L_{p}([0,1]^d)} \\
&\qquad\qquad\qquad\le\, \det(T)\,\Big\|M_{B,\Omega}^{1/p'} 
		\Big(\sum_{\ell\in\Z^d}\Big(\sum_{m\in\N_0^d}	\Big|h_m(\ell+\cdot) 
	\Big|^\theta\Big)^{p/\theta}\Big)^{1/p} \Big\|_{L_{p}([0,1]^d)}\,.
\end{split}
\]
Performing the integration and interchanging sum and integral yields

\[\begin{split}
&\Big\|\Big(\sum_{m\in\N_0^d}\Big|\sum_{\ell\in\Z^d}\F f_m(B \ell)\, \e^{2\pi i \ell \cdot}
	\Big|^\theta\Big)^{1/\theta} \Big\|_{L_{p}([0,1]^d)}\\
&\qquad\qquad\qquad\le\, \det(T)\,M_{B,\Omega}^{1/p'} 
	\Big(\int_{[0,1]^d}\sum_{\ell\in\Z^d}\Big(\sum_{m\in\N_0^d} 
	\Big|h_m(\ell+x)\Big|^\theta\Big)^{p/\theta} \dint x\Big)^{1/p} \\
&\qquad\qquad\qquad=\, \det(T)\,M_{B,\Omega}^{1/p'} 
	\Big(\sum_{\ell\in\Z^d}\int_{\ell+[0,1]^d}\Big(\sum_{m\in\N_0^d} 
	\Big|h_m(x)\Big|^\theta\Big)^{p/\theta} \dint x\Big)^{1/p} \\
&\qquad\qquad\qquad=\, \det(T)\,M_{B,\Omega}^{1-1/p} 
	\Big(\int_{\R^d}\Big(\sum_{m\in\N_0^d} 
	\Big|h_m(x)\Big|^\theta\Big)^{p/\theta} \dint x\Big)^{1/p} \\
&\qquad\qquad\qquad=\, 
\Big(\det(T)\,M_{B,\Omega}\Big)^{1-1/p} 
	\Big\|\Big(\sum_{m\in\N_0^d} |f_m|^\theta\Big)^{1/\theta} \Big\|_{p}.
\end{split}\]
The second statement follows if we set $f_0=f$ and $f_m=0$, $m\neq0$.\\
\end{proof}

\begin{rem}\label{rem:volume}
Note that $M_{B,\Omega}$ is the number of unit cubes in the standard tessellation 
of $\R^d$ that are necessary to 
cover the set $B^\top(\Omega)$, while $\det(B)$ equals the volume of 
$B^\top([0,1]^d)$. This shows that with a matrix of the form $B_n=n^{1/d} B$, 
$n\ge1$, cf.~\eqref{eq:B}, we obtain
\[
\lim_{n\to\infty}\, \frac{M_{B_n,\Omega}}{\det(B_n)} \,=\, {\rm vol}_d(\Omega)
\]
for every Jordan measurable set $\Omega$.
\end{rem}

\medskip

\begin{lem}\label{lem:norm2}
Let $B\in\R^{d\times d}$ be an invertible matrix and 
$\Omega\in\R^d$ be a bounded set. 
Furthermore, let $g\in\S(\R^d)$ with 
$\supp(\F g)\subset\Omega$.
Then, for $1\le p\le\infty$, we have
\[
\Big\|\sum_{k\in\Z^d} \F g(B k)\, \e^{2\pi i k \cdot}\Big\|_{L_{p}([0,1]^d)}
\,\le\, \abs{B(\Z^d)\cap\Omega}^{1-1/p}\, \|g\|_1.
\]
\end{lem}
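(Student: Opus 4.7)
My plan is to reduce to the two endpoint cases $p=1$ and $p=\infty$ and then interpolate. Set $N:=|B(\Z^d)\cap\Omega|$ and denote by $F(x):=\sum_{k\in\Z^d}\F g(Bk)\e^{2\pi ikx}$ the trigonometric polynomial on the right-hand side. Observe that the support hypothesis $\supp(\F g)\subset\Omega$ makes $F$ a \emph{finite} sum, with exactly $N$ (potentially) nonzero terms.

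For the endpoint $p=\infty$ the bound is essentially trivial: apply the triangle inequality together with $|\F g(Bk)|\le\|g\|_1$, restrict the sum to those $k$ with $Bk\in\Omega$, and count. This gives $\|F\|_\infty\le N\|g\|_1=N^{1-1/\infty}\|g\|_1$.

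The endpoint $p=1$ is the main work and is the natural place for Poisson's summation formula. I would introduce $T:=(B^{-1})^\top$ and the auxiliary Schwartz function $h(y):=g(Ty)$. A routine change of variables yields $\F h(k)=|\det B|\,\F g(Bk)$ and $\|h\|_1=|\det B|\,\|g\|_1$. Since $h\in\S(\R^d)$ the periodization $\sum_{\ell\in\Z^d}h(\ell+x)$ converges absolutely, so Theorem~\ref{thm:periodization} gives the pointwise identity
\[
F(x) \;=\; \frac{1}{|\det B|}\sum_{\ell\in\Z^d}h(\ell+x).
\]
Taking $L_1$-norm on $[0,1]^d$, moving the sum outside, and using that $\{\ell+[0,1]^d\}_{\ell\in\Z^d}$ tiles $\R^d$ yields $\|F\|_{L_1([0,1]^d)}\le|\det B|^{-1}\|h\|_1=\|g\|_1=N^{1-1/1}\|g\|_1$, as required.

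To get the full range $1\le p\le\infty$ I would use log-convexity of $L_p$ norms on the probability space $[0,1]^d$, i.e.\ $\|F\|_p\le\|F\|_\infty^{1-1/p}\|F\|_1^{1/p}$ (this is just $\int|F|^p\le\|F\|_\infty^{p-1}\int|F|$). Combining the two endpoint bounds yields
\[
\|F\|_{L_p([0,1]^d)}\;\le\;\bigl(N\|g\|_1\bigr)^{1-1/p}\bigl(\|g\|_1\bigr)^{1/p}\;=\;N^{1-1/p}\|g\|_1,
\]
which is the claim. The only delicate step is the Poisson-summation identity for $p=1$; everything else is either a counting argument or elementary interpolation. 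The hypothesis $g\in\S(\R^d)$ keeps this step painless, since then $h\in\S(\R^d)$ and Theorem~\ref{thm:periodization} applies without any further justification.
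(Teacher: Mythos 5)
Your proof is correct and follows essentially the same route as the paper's: establish the endpoint $p=1$ via Poisson summation (the change of variables $h(y)=g(Ty)$ and periodization), establish $p=\infty$ by counting the at most $|B(\Z^d)\cap\Omega|$ nonzero Fourier coefficients and bounding each by $\|g\|_1$, then interpolate via log-convexity of $L_p$ norms on $[0,1]^d$. The only difference is that the paper invokes the $p=1$ computation already carried out in the proof of Lemma~\ref{lem:norm1} rather than redoing it, but the substance is identical.
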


\begin{proof}
Clearly, the proof of Lemma~\ref{lem:norm1} for $p=1$ works also for functions 
$f_m$ without compact support and, hence, already proves the statement for $p=1$. 
(Set $f_0=g$ and $f_m=0$ for $m\neq0$.)
For $p=\infty$ we easily obtain the upper bound 
$|B(\Z^d)\cap\Omega|\cdot \|\F g\|_\infty\le|B(\Z^d)\cap\Omega|\cdot \|g\|_1$. 
Hence, the lemma follows by H\"older's inequality.\end{proof}


\subsection{A discrete version of Calderon's reproducing formula}

Our analysis heavily relies on a discrete version of Calderon's reproducing formula \cite[eq.~(3.1)]{Ca77}. A ``continuous'' and homogeneous 
version of the Lemma below has been proved in \cite{JaTa82}. This principle has been used by several authors \cite{BuPaTa96}, \cite{FrJa90}, \cite{Ry99}, \cite{Ry99a} to prove equivalent (local mean) characterizations for Besov-Triebel-Lizorkin spaces, see Section \ref{sec:spaces}.


\begin{lem}\label{lem:decomp}
Let $\Psi_0, \Psi_1\in\S(\R)$ be functions with
\begin{equation}\label{tc1}
|\F\Psi_0(\xi)|>0\quad \text{ for } |\xi|<\eps
\end{equation}
and 
\begin{equation}\label{tc2}
|\F\Psi_1(\xi)|>0\quad \text{ for } \frac\eps2<|\xi|<2\eps
\end{equation}
for some $\eps>0$. Then there exist $\Lambda_0, \Lambda_1\in \S(\R)$ such that
\begin{enumerate}[(\it i)]
	\item $\supp\,\F\Lambda_0 \subset \{t\in \R:|t| \leq \eps\}$
	\item $\supp\,\F\Lambda_1 \subset \{t\in \R:\eps/2 \leq |t| \leq 2\eps\}$ and
	\item for all $\xi\in\R$, 
		\begin{equation}\label{eq:Lambda}
		\sum_{j=0}^\infty \F\Lambda_j(\xi)\, \F\Psi_j(\xi) \,=\, 1, 
 		\end{equation}
		\noindent where $\Psi_j(x)=2^{j-1}\Psi_1(2^{j-1}x)$ and 
		$\Lambda_j(x)=2^{j-1}\Lambda_1(2^{j-1}x)$ for $j\in\N$.
\end{enumerate}
\end{lem}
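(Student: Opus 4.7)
The plan is to reduce the construction to a standard dyadic resolution of unity in the Fourier side, and then divide by $\F\Psi_j$ to obtain $\F\Lambda_j$. The only delicate point is that the nonvanishing conditions \eqref{tc1}, \eqref{tc2} are open, so we must design the resolution of unity with supports strictly inside the respective nonvanishing sets.

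First I would fix a small $\delta>0$ and pick a smooth cutoff $\phi\in C_c^\infty(\R)$ with $\phi(\xi)=1$ for $|\xi|\le 1/2+\delta$ and $\phi(\xi)=0$ for $|\xi|\ge 1-\delta$. Setting $\varphi_0(\xi):=\phi(\xi/\eps)$ and $\varphi_1(\xi):=\phi(\xi/(2\eps))-\phi(\xi/\eps)$, a routine check shows
\[
\supp\varphi_0\subset\{|\xi|\le(1-\delta)\eps\},\qquad
\supp\varphi_1\subset\{(1/2+\delta)\eps\le|\xi|\le(2-2\delta)\eps\},
\]
both compactly contained in the open sets appearing in \eqref{tc1} and \eqref{tc2}. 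Defining $\varphi_j(\xi):=\varphi_1(2^{-(j-1)}\xi)$ for $j\ge 1$, the telescoping identity
\[
\varphi_0(\xi)+\sum_{j=1}^{N}\varphi_j(\xi)=\phi(2^{-N}\xi/\eps)
\]
tends to $1$ as $N\to\infty$ for every $\xi\in\R$, so $\{\varphi_j\}_{j\ge 0}$ is a dyadic partition of unity.

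Next I would set
\[
\F\Lambda_0(\xi):=\frac{\varphi_0(\xi)}{\F\Psi_0(\xi)},\qquad
\F\Lambda_1(\xi):=\frac{\varphi_1(\xi)}{\F\Psi_1(\xi)},
\]
with the convention that the right-hand sides vanish outside the supports of $\varphi_0$, $\varphi_1$. Since $\F\Psi_0$ and $\F\Psi_1$ are smooth and, by \eqref{tc1}--\eqref{tc2}, nowhere zero on the compact sets $\supp\varphi_0$ and $\supp\varphi_1$, the functions $1/\F\Psi_0$ and $1/\F\Psi_1$ are smooth in a neighborhood of these supports, and the products above are $C^\infty$ functions with compact support. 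Hence $\F\Lambda_0,\F\Lambda_1\in C_c^\infty(\R)\subset\S(\R)$, their supports satisfy (i) and (ii), and by Paley--Wiener $\Lambda_0,\Lambda_1\in\S(\R)$ as well.

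Finally I would verify \eqref{eq:Lambda}. From $\Lambda_j(x)=2^{j-1}\Lambda_1(2^{j-1}x)$ it follows that $\F\Lambda_j(\xi)=\F\Lambda_1(2^{-(j-1)}\xi)$, and likewise $\F\Psi_j(\xi)=\F\Psi_1(2^{-(j-1)}\xi)$, so for every $j\ge 1$
\[
\F\Lambda_j(\xi)\,\F\Psi_j(\xi)\,=\,\bigl[\varphi_1/\F\Psi_1\bigr](2^{-(j-1)}\xi)\cdot\F\Psi_1(2^{-(j-1)}\xi)\,=\,\varphi_1(2^{-(j-1)}\xi)\,=\,\varphi_j(\xi),
\]
while $\F\Lambda_0\,\F\Psi_0=\varphi_0$ by definition. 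Summing gives $\sum_{j\ge 0}\F\Lambda_j(\xi)\F\Psi_j(\xi)=\sum_{j\ge 0}\varphi_j(\xi)=1$, which is \eqref{eq:Lambda}. The only nontrivial step in the argument is the construction of the partition of unity whose supports are strictly interior to the prescribed open sets while still summing to one; once that is in place, the rest is essentially bookkeeping with the dyadic dilation.
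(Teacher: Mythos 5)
Your proposal is correct and follows essentially the same route as the paper: construct a dyadic partition of unity $\{\varphi_j\}$ on the Fourier side with supports strictly inside the sets where $\F\Psi_0$ and $\F\Psi_1$ are nonvanishing, then set $\F\Lambda_j:=\varphi_j/\F\Psi_j$ (the paper uses the specific cutoff with $\varphi=1$ on $[-4/3,4/3]$, $\supp\varphi\subset[-3/2,3/2]$, rescaled by $2/\eps$, whereas you parameterize by $\delta$, but the mechanism is identical). One small remark: the justification that $\Lambda_0,\Lambda_1\in\S(\R)$ does not really require Paley--Wiener; it is just the fact that $\F$ is a bijection on $\S(\R)$ together with $C_c^\infty\subset\S$.
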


\bproof Following \cite[Thm.\ 1.20]{Vyb06} we use the special dyadic 
decomposition of unity with $\varphi(t) = 1$ if $|t|\leq 4/3$ and 
$\varphi(t) =0$ if $|t|>3/2$. 
Put $\Phi_0:=\F^{-1}\varphi$ and $\Phi_1:=2\Phi_0(2\cdot)-\Phi_0$, 
i.e.~$\F\Phi_1=\Phi_0(\cdot/2)-\Phi_0$. 
With $\Phi_j:=2^{j-1}\Phi_1(2^{j-1}\cdot)$ for $j\geq 1$ we define $\Lambda_0, \Lambda_1$ through 
$$
    \F\Lambda_j(t) := \frac{\F \Phi_j(2t/\eps)}{\F \Psi_j(t)}\quad,\quad t\in \R\,.
$$    
\eproof
\noindent We define the $d$-fold tensorized functions 
$$
\Lambda_m(x) \,:=\, \prod_{i=1}^d \Lambda_{m_i}(x_i)\quad\mbox{and}\quad \Psi_m(x) \,:=\, \prod_{i=1}^d \Psi_{m_i}(x_i)\quad,\quad x\in \R^d. 
$$
where $\Lambda_j, \Psi_j$, $j\in\N$, are defined in Lemma~\ref{lem:decomp}.
We obtain from \eqref{eq:Lambda} the identity 
\begin{equation}\label{eq:calderon}
\sum_{m\in\N_0^d} \F\Lambda_m(\xi)\, \F\Psi_m(\xi) \,=\, 1\quad,\quad \xi \in \R^d\,.
\end{equation}
By the construction of the tensorized functions (and Lemma~\ref{lem:decomp}) 
we know that the support of $\F\Lambda_m$ is of the form \eqref{eq:dyadic} 
and we will write in the sequel
\[
I_m \,:=\, \supp\,\F\Lambda_m, \qquad m\in\N_0^d.
\]

%
%
%

\section{Function spaces with dominating mixed smoothness}
\label{sec:spaces}

In this section we introduce the function spaces under consideration, namely, the Besov and Triebel-Lizorkin spaces of dominating mixed smoothness. 
Note that the Sobolev spaces of mixed smoothness appear as a special case of the 
Triebel-Lizorkin spaces. There are several equivalent characterizations of these spaces, 
see \cite{Vyb06}. For our purposes, the most suitable 
is the characterization by local means (see \cite[Theorem~1.23]{Vyb06} or 
\cite[Definition~2.5]{TU08}). 

\medskip

We start with the definition of the spaces on $\R^d$.

\noindent 
Let $\Psi_0,\Psi_1\in C_0^\infty(\R)$ be such that 
\begin{enumerate}
	\item[$(i)$] $|\F\Psi_0(\xi)|>0$\qquad for $|\xi|<\eps$,
	\item[$(ii)$] $|\F\Psi_1(\xi)|>0$\qquad for $\frac\eps2<|\xi|<2\eps$ and 
	\item[$(iii)$] $D^\a\F\Psi_1(0)=0$\qquad for all $0\le\a\le L$
\end{enumerate}
for some $\eps>0$. A suitable $L$ will be chosen in Definitions~\ref{def:besov}~\&~\ref{def:TL}. 
As usual, we define 
\[
\Psi_j(x) \,=\, 2^{j-1}\, \Psi_1(2^{j-1} x), \qquad j\in\N, 
\]
and the ($d$-fold) tensorization
\begin{equation}\label{eq:Psi}
\Psi_m(x) \,=\, \prod_{i=1}^d \Psi_{m_i}(x_i),
\end{equation}
where $m=(m_1,\dots,m_d)\in\N_0^d$ and $x=(x_1,\dots,x_d)\in\R^d$. 

\begin{rem}\label{up} There exist compactly supported functions $\Psi_0, \Psi_1$ satisfying (i)-(iii) above. 
Consider $\Psi_0$ to be the {\em up-function}, 
see Rvachev~\cite{Rv90}. This function satisfies $\Psi_0\in C^{\infty}_0(\R)$ with $\supp(\Psi_0)=[-1,1]$
and $\F\Psi_0(\xi) = \prod_{k=0}^\infty\, \sinc(2^{-k}\xi)$, $\xi\in\R$, where $\sinc$ denotes the normalized sinus
cardinalis $\sinc(\xi) = \sin(\pi \xi)/(\pi \xi)$. If we define $\Psi_1\in C_0^\infty(\R)$ to be
\[
\Psi_1(x) \,:=\, \frac{d^L}{dx^L}\bigl(2\Psi_0(2\cdot)-\Psi_0(\cdot)\bigr)(x), 
\qquad x\in\R\,,
\]
it follows that
$
\F\Psi_1(\xi) \,=\, (2\pi i \xi)^{L}
\bigl(\F\Psi_0(\xi/2) - \F\Psi_0(\xi)\bigr).
$
It is easily checked that these functions satisfy the conditions (i),(ii),(iii) above. In particular, (i) and (ii) is
satisfied with $\eps = 1$.
Moreover, we have for all $m\in\N_0^d$ that the tensorized functions 
$\Psi_m$ satisfy $\supp(\Psi_m)\subset[-1, 1]^d$. 
We will work with this choice in the sequel.
\end{rem}

Let us continue with the definition of the function spaces $\Aspt = \Aspt(\R^d)$ with 
$\A\in\{\mathbf{B},\mathbf{F}\}$ defined on the entire $\R^d$.

\begin{defi}[Besov space] \label{def:besov}
Let $0 < p,\theta\le\infty$, $s\in\R$, and 
$\{\Psi_m\}_{m\in\N_0^d}$ be as above with $L+1>s$. 
The \emph{Besov space of dominating mixed smoothness} 
$\Bspt=\Bspt(\R^d)$ is the set of all $f\in \S'(\R^d)$ 
such that 
\[
\|f\|_{\Bspt} \,:=\, 
\Big(\sum_{m\in\N_0^d} 2^{s |m|_1 \theta}\, 
	\|\Psi_m \ast f\|_p^\theta\Big)^{1/\theta} \,<\,\infty
\]
with the usual modification for $\theta=\infty$. 
\end{defi}

\begin{defi}[Triebel-Lizorkin space] \label{def:TL}
Let $0< p<\infty$, $0 < \theta\le\infty$, $s\in\R$, and 
$\{\Psi_m\}_{m\in\N_0^d}$ be as above with $L+1>s$. 
The \emph{Triebel-Lizorkin space of dominating mixed smoothness} 
$\Fspt=\Fspt(\R^d)$ is the set of all $f\in \S'(\R^d)$ 
such that 
\[
\|f\|_{\Fspt} \,:=\, 
\Big\|\Big(\sum_{m\in\N_0^d} 2^{s |m|_1 \theta}\, 
	|\Psi_m \ast f(\cdot)|^\theta\Big)^{1/\theta}\Big\|_p \,<\,\infty
\]
with the usual modification for $\theta=\infty$. 
\end{defi}

\begin{rem} In the special case $\theta=2$ and $1<p<\infty$ we put $\Wsp := \Fsptw$ which denotes the 
\emph{Sobolev spaces of dominating mixed smoothness}. It is well-known (cf.~\cite[Chapt.\ 2]{ScTr87} or \cite{Vyb06}), 
that in case $s\in\N_0$ the spaces $\Wsp$ can be equivalently normed by 
\[
\|f\|_{\Wsp} \,\asymp\, \Big(\sum_{\substack{\a\in\N_0^d\\ |\a|_\infty\le s}} \|D^\a f\|_p^p\Big)^{1/p}\,.
\]
 
\end{rem}

\begin{rem} Different choices of $\Psi_0, \Psi_1$ lead to equivalent (quasi-)norms. 
In fact, it is not even necessary that $\Psi_0$ and $\Psi_1$ have compact support. 
However, for the proof of our results this specific choice is crucial.
\end{rem}

\begin{rem}
Note, that the spaces $\Bspt(\R^d)$ and $\Fspt(\R^d)$ are usually defined via a dyadic 
decomposition of unity on the Fourier side (which represents a special case of the above 
framework) like the one we used in the proof of Lemma \ref{lem:decomp}, see \cite{Vyb06}. 
Here we used a different characterization which was proven to be equivalent, 
see \cite{Vyb06} and \cite{TU08}. 
Let us comment on the recent and non-trivial history of those characterizations. 
\begin{itemize}
\item In 1992 Triebel proved those characterizations for the isotropic spaces 
$F^s_{p,q}(\R^d)$, see \cite[2.4.2, 2.5.1]{Tr92}. He obtained characterizations for $p,\theta \ge 1$.

\item Later, in 1999, Rychkov \cite{Ry99a} extended it to the quasi-Banach case. However, there is a gap in his proof (observed 2007 by Hansen), see \cite{Ha10} and Remark 4.4 in \cite{TU08}.

\item In 2005 Vyb\'iral modified Rychkov's method for the dominating mixed case. Vyb\'iral's proof also contains Rychkov's gap. However, Vyb\'iral was the first who did the local means for dominating mixed spaces.

\item The proof in \cite{TU08} differs from Vyb\'iral's proof and fixes the mentioned gap, see also Hansen \cite{Ha10}. The proof is based on a maximal function technique, see \cite[pp. 20]{TU08}, due to Str\"omberg, Torchinsky \cite[Chapt. V]{StTo} which has been already proposed by Rychkov \cite[Thm.\ 3.2]{Ry99}. 

\item The proof in \cite{TU08} is a bit more general, namely for function spaces on semi-infinite rectangular domains. 
\end{itemize}
\end{rem}

\noindent The next lemma collects some frequently used embedding properties of the spaces.

\begin{lem}\label{emb} Let $0<p<u\le\infty$ ($p,u<\infty$ in the $\T$-case),
$s,t\in \R$, $0<\theta,\eta\leq \infty$.\\
{\em (i)} For equal $p$ we have the chain of embeddings 
$$
    \B^s_{p,\min\{p,\theta\}}\hookrightarrow \Fspt \hookrightarrow
\B^s_{p,\max\{p,\theta\}}
$$
{\em (ii)} In addition, whenever $s-1/p = t-1/u$ the following ``diagonal
embeddings'' hold true 
\begin{equation*}
       \B^s_{p,\theta} \hookrightarrow \B^t_{u,\theta}\quad,\quad
\T^s_{p,\theta} \hookrightarrow \T^t_{u,\eta}
 \end{equation*}
{\em (iii)} as well as the embeddings 
\begin{equation*}
       \T^s_{p,\theta} \hookrightarrow \B^t_{u,p}\quad,\quad
\B^s_{p,u} \hookrightarrow \T^t_{u,\eta}\,.
 \end{equation*}
\end{lem}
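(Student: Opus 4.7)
The plan is to treat the three parts in order of difficulty, using throughout the local means characterization together with the fact that each $\Psi_m\ast f$ has Fourier transform supported in a dyadic rectangle of side lengths proportional to $2^{m_1},\dots,2^{m_d}$.

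For part (i), set $a_m(x):=2^{s|m|_1}|\Psi_m\ast f(x)|$ so that
$
\|f\|_{\Bspt}=\bigl\|\,\|a_m\|_p\bigr\|_{\ell_\theta(\N_0^d)}
$
and
$
\|f\|_{\Fspt}=\bigl\|\,\|a_m(\cdot)\|_{\ell_\theta}\bigr\|_p,
$
i.e.\ the two (quasi-)norms measure the same nonnegative array in a different order. The embedding $\B^s_{p,\min\{p,\theta\}}\hookrightarrow \Fspt$ follows from monotonicity of the $\ell_q$-norms when $\theta\ge p$, and from Minkowski's integral inequality with exponent $p/\theta\ge 1$ when $\theta\le p$; the embedding $\Fspt\hookrightarrow \B^s_{p,\max\{p,\theta\}}$ is obtained by the dual pair of arguments. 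This step uses nothing about the support of $\Psi_m$.

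For the Besov part of (ii), I would apply the tensorised Nikol'skii inequality
\[
\|\Psi_m\ast f\|_u \,\lesssim\, 2^{|m|_1(1/p-1/u)}\,\|\Psi_m\ast f\|_p\,,\qquad p\le u\,,
\]
which is available precisely because $\F(\Psi_m\ast f)$ is compactly supported in a rectangle of volume $\asymp 2^{|m|_1}$. Since the scaling condition $s-1/p=t-1/u$ rewrites as $t+(1/p-1/u)=s$, multiplying the last display by $2^{t|m|_1}$ and taking $\ell_\theta$-norms in $m$ immediately yields $\B^s_{p,\theta}\hookrightarrow \B^t_{u,\theta}$.

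The F-space embedding in (ii) together with both embeddings in (iii) are the genuinely hard part: they are the Jawerth--Franke embeddings in the dominating-mixed setting. In particular, in $\T^s_{p,\theta}\hookrightarrow \T^t_{u,\eta}$ the microscopic parameter $\eta$ on the target may be chosen freely, a feature that cannot follow from Nikol'skii alone. My plan is to pass to the atomic (or equivalent wavelet) decomposition of $\Fspt$ and $\Bspt$ from \cite{Vyb06}, thereby reducing the problem to inequalities between sequence spaces of $\mathbf{f}$- and $\mathbf{b}$-type, where $\mathbf{f}$ combines an outer $L_u$ quasi-norm with an inner $\ell_\eta$ quasi-norm and $\mathbf{b}$ does the two in reversed order. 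The main obstacle is the sharp maximal-function estimate needed to trade smoothness for integrability along the line $s-1/p=t-1/u$ while simultaneously making the inner parameter disappear (the F-to-F case) or collapse from $u$ to $p$ (the Franke case $\B^s_{p,u}\hookrightarrow\T^t_{u,\eta}$); in the tensor-product geometry of mixed smoothness this argument has to be iterated coordinatewise, and I would rely on the one-dimensional Jawerth--Franke arguments adapted to dyadic rectangles as carried out in the references cited above.
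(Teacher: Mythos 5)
The paper's own ``proof'' of this lemma is a bare citation (to Schmeisser--Triebel, Temlyakov, and Hansen--Vyb\'iral), so there is no detailed argument in the paper against which to compare. Your outline is consistent with those references and is correct as far as it goes: the monotonicity/Minkowski argument for (i) and the tensorised Nikol'skij inequality for the Besov diagonal embedding in (ii) are the standard elementary proofs, and for the F-to-F embedding and for part (iii) you correctly identify these as Jawerth--Franke type results and sketch the reduction to sequence spaces via the atomic decomposition of \cite{Vyb06}, which is precisely the route taken in the reference \cite{E} (Hansen and Vyb\'iral) that the paper cites. One simplification worth recording: you need not attack the F-to-F embedding $\T^s_{p,\theta}\hookrightarrow\T^t_{u,\eta}$ as a separate hard case. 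Choose an intermediate $v$ with $p<v<u$ and set $r:=s-1/p+1/v$, so that $s-1/p=r-1/v=t-1/u$. Then
$$
\T^s_{p,\theta}\hookrightarrow \B^r_{v,p}\hookrightarrow \B^r_{v,u}\hookrightarrow \T^t_{u,\eta},
$$
using the first embedding of (iii), the trivial monotonicity in the fine index (valid since $p<u$), and the second embedding of (iii). Thus only (iii) genuinely requires the full Jawerth--Franke machinery; once it is in place, the remaining diagonal statements in (ii) follow by composition.
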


\bproof For a proof we refer to \cite[Chapt.\ 2]{ScTr87}, as well as \cite{D} and
\cite{E}.
\eproof

\begin{rem} The embeddings in Lemma \ref{emb}(iii) are highly non-trivial. Let us comment on the history
of these important embeddings. Although they are referred to as Jawerth-Franke embeddings several
different people contributed to this result, especially what concerns Besov-Lizorkin-Triebel spaces with dominating
mixed smoothtness. 

\begin{itemize}
 \item An analog of the first embedding in Lemma \ref{emb}(iii) for isotropic Besov-Triebel-lizorkin spaces
$F^s_{p,q}(\R^d)$ and $B^s_{p,q}(\R^d)$ has been obtained by Jawerth \cite{A}, the second one by Franke \cite{H}. The
proof was based on real interpolation. 
\item A new proof of both relations based on atomic decompositions has
been given recently by Vyb\'iral \cite{B} without using real interpolation techniques. 
\item Hansen and Vyb\'iral \cite{E} extended this technique to prove the respective embeddings for spaces
with dominating mixed smoothness $\Bspt$ and $\Fspt$. Note, that the technique used by Jawerth and Franke (using real
interpolation) does not apply for spaces of mixed smoothness. 
\item A special case of the embeddings in Lemma \ref{emb}(iii) with the appropriate Sobolev type spaces $\Wsp(\tor)$
(where $\theta = 2$) instead of $\Fspt$ is due to Temlyakov \cite{C,D}.
\item The second embedding in Lemma \ref{emb}(iii) in the special case when $\Fspt$ is replaced by $L_u$ in the
univariate univariate case goes back to Ul'yanov \cite{F} and Timan \cite{G}. 
\end{itemize}
\end{rem}

In the sequel we will always assume that $s> 1/p$ 
This assures that the
functions in $\Bspt$, $\Fspt$ and $\Wsp$, respectively, are continuous, see
\cite[Chapt.\ 2]{ScTr87}. With the same reasoning we obtain that
$\B^{1/p}_{p,\theta} \hookrightarrow C(\R^d)$ for $\theta\le1$, 
and $\T^{1/p}_{p,\theta} \hookrightarrow C(\R^d)$ for $p\le1$.

\medskip
 

The class of functions we are interested in throughout this article are 
the subclasses 
of functions from $\Aspt(\R^d)$ which are supported in the unit cube $[0,1]^d$, 
i.e.~we consider the classes 
\begin{equation}\label{Ao}
\Ao \,:=\, \bigl\{ f\in \Aspt(\R^d)\colon \supp(f)\subset[0,1]^d\bigr\}
\end{equation}
for $\A\in\{\B,\T\}$.


\section{Integration of functions from $\Ao$}
\label{sec:results}

Here, we study the cubature formula $Q_n$ from \eqref{eq:frolov}, see Section~\ref{sec:frolov}, 
for the spaces $\Ao$ defined in \eqref{Ao}. Our main results read as follows. 

\begin{thm}\label{thm:besov}
For each $1\le p,\theta\le\infty$
and $s>1/p$, we have 
\[
e(Q_n, \Bo) \,\asymp\, n^{-s} (\log n)^{(d-1)(1-1/\theta)}.
\]
\end{thm}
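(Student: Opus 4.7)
The plan is to establish Theorem~\ref{thm:besov} by combining Poisson summation (Corollary~\ref{cor:poisson}) with the discrete Calderon reproducing formula \eqref{eq:calderon}, then summing a sharp per-band estimate via H\"older's inequality. For the upper bound, since $\supp f \subset [0,1]^d$, Frolov's rule reduces to $Q_n(f) = \det(T_n)\sum_{\ell\in\Z^d} f(T_n\ell)$, and Corollary~\ref{cor:poisson} together with $If = \F f(0)$ yields
$$
Q_n(f) - If \;=\; \lim_{N\to\infty}\sum_{|m|_\infty\le N}\sum_{k\ne 0}\phi_m(B_n k)\,\F f(B_n k).
$$
Substituting the Calderon identity $\F f(\xi) = \sum_{m'} \F\Lambda_{m'}(\xi)\,\F(\Psi_{m'}*f)(\xi)$ and using $\supp \F\Lambda_{m'}\subset I_{m'}$ together with Lemma~\ref{lem:Z}, only frequency bands with $|m|_1 > r_n$ contribute, giving
$$
Q_n(f) - If \;=\; \sum_{|m|_1 > r_n} E_m, \quad E_m \;:=\; \sum_{k\ne 0,\,B_n k\in I_m} \F\Lambda_m(B_n k)\,\F(\Psi_m*f)(B_n k).
$$

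The heart of the proof is the per-band estimate $|E_m|\lesssim (2^{|m|_1}/n)^{1/p}\,\|\Psi_m*f\|_p$. Setting $u_m := \Psi_m*f$ (which is compactly supported in $[-1,2]^d$) and $h_m := \Lambda_m * u_m$, applying Poisson to the Schwartz function $h_m$ and using that $\F\Lambda_m(0) = 0$ for $m\ne 0$ yields
$$
E_m \;=\; \det(T_n)\sum_{\ell\in\Z^d}h_m(T_n\ell) \;=\; \int_{\R^d} u_m(y)\,K_m(y)\,dy, \quad K_m(y) := \sum_{k\in\Z^d}\F\Lambda_m(B_n k)\,e^{-2\pi i\langle B_n k, y\rangle}.
$$
The function $K_m$ is $T_n\Z^d$-periodic, so the change of variable $y = T_n z$ converts the $L^{p'}$-norm of $K_m$ over a fundamental cell into that of the trigonometric polynomial $\sum_k \F\Lambda_m(B_n k)\,e^{-2\pi i\langle k, z\rangle}$ on $[0,1]^d$, which Lemma~\ref{lem:norm2} bounds by $Z_n(m)^{1/p}\|\Lambda_m\|_1 \lesssim Z_n(m)^{1/p}$. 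Since $[-1,2]^d$ covers $\asymp n$ fundamental cells, $\|K_m\|_{L^{p'}([-1,2]^d)} \lesssim Z_n(m)^{1/p} \lesssim (2^{|m|_1}/n)^{1/p}$, and H\"older's inequality delivers the claim.

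Summing via H\"older's inequality on the $\ell_\theta$-scale with conjugate $\theta'$, and using the definition of $\|f\|_{\Bo}$,
$$
|Q_n(f) - If| \;\lesssim\; n^{-1/p}\Big(\sum_{|m|_1 > r_n} 2^{\theta' |m|_1 (1/p - s)}\Big)^{1/\theta'}\|f\|_{\Bo} \;\lesssim\; n^{-s}(\log n)^{(d-1)(1-1/\theta)}\|f\|_{\Bo},
$$
where the geometric series converges precisely because $s > 1/p$, and the logarithmic factor stems from the $\asymp N^{d-1}$ count of $m\in\N_0^d$ with $|m|_1 = N$ combined with $r_n \asymp \log n$. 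For the matching lower bound I would adapt the atomic-decomposition approach of \cite{DU14}: construct a normalized fooling function $f\in\Bo$ assembled from tensor-product atoms at frequency scale $|m|_1 \approx r_n$, localized in spatial cells that avoid the Frolov lattice $\X_n$, so that $Q_n(f) = 0$ and $If \gtrsim n^{-s}(\log n)^{(d-1)(1-1/\theta)}$.

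The main obstacle is the per-band estimate. The naive bound $|E_m| \le Z_n(m)\,\|\F h_m\|_\infty \lesssim (2^{|m|_1}/n)\,\|u_m\|_p$ produces the right rate only for $s > 1$, and missing the full range $s > 1/p$. Recovering the sharp exponent $1/p$ in $(2^{|m|_1}/n)^{1/p}$ requires the duality via Lemma~\ref{lem:norm2}, which replaces the crude pointwise bound on $\F h_m$ across $Z_n(m)$ lattice points by a controlled $L^{p'}$-norm on the periodized kernel $K_m$.
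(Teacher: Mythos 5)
Your proof is correct and follows essentially the same route as the paper: Poisson summation combined with Calderon's reproducing formula to get the error formula, a per-band estimate $|E_m|\lesssim Z_n(m)^{1/p}\|\Psi_m*f\|_p$, summation via H\"older on the $\ell_\theta$-scale, and the atomic-decomposition lower bound from \cite{DU14}. The only cosmetic difference is in deriving the per-band bound --- you pair $u_m$ against the periodized kernel $K_m$ directly on $\R^d$ and then use change of variables plus Lemma~\ref{lem:norm2}, whereas the paper writes the pairing as an inner product of two trigonometric series on $[0,1]^d$ and applies Lemma~\ref{lem:norm1} and Lemma~\ref{lem:norm2} to the two sides separately; these are equivalent and produce the identical estimate.
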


\bigskip
\goodbreak

\begin{thm}\label{thm:TL}
For each $1\le p < \infty$, $1\le\theta\le\infty$ 
and $s>1/p$ the following bounds hold true.
\vspace{2mm}
\begin{enumerate}[\it (i)]
	\item If $s>1/\theta$ then 
		\[
		e(Q_n, \Fo) \;\asymp\; n^{-s}\, (\log n)^{(d-1)(1-1/\theta)}\,.
		\]
	\item If $s<1/\theta$ then
		\[
		e(Q_n, \Fo) \;\lesssim\; n^{-s}\, (\log n)^{(d-1)(1-s)}\,.
		\]
	\item If $s=1/\theta$ then
		\[
		e(Q_n, \Fo) \;\lesssim\; n^{-s}\, (\log n)^{(d-1)(1-s)}\, 
		(\log\log n)^{1-s}\,.
		\]
\end{enumerate}
\end{thm}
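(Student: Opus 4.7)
For $f\in\Fo$, continuity (guaranteed by $s>1/p$) together with compact support in $[0,1]^d$ lets me invoke Corollary~\ref{cor:poisson} to get the error representation
\[
Q_n(f)-I(f)=\sum_{z\in B_n(\Z^d)\setminus\{0\}}\F f(z),
\]
understood in the limiting sense of that corollary. I then plug in Calderón's decomposition from Lemma~\ref{lem:decomp}, $\F f=\sum_{m}\F\Lambda_m\cdot\F(\Psi_m*f)$, which splits the error into dyadic blocks
\[
Q_n(f)-I(f)=\sum_m A_m,\qquad A_m=\sum_{z\in(B_n(\Z^d)\setminus\{0\})\cap I_m}\F\Lambda_m(z)\,\F(\Psi_m*f)(z).
\]
By Lemma~\ref{lem:Z}(i), only blocks with $|m|_1>r_n$ contribute. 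For each such $m$, parametrising $z=B_n\ell$ and using orthogonality of the characters on $[0,1]^d$ together with $\F\Lambda_m(0)=0$, I recast $A_m$ as the pairing
\[
A_m=\int_{[0,1]^d}G_m(x)\,\overline{L_m(x)}\,dx,
\]
where $G_m(x)=\sum_\ell\F(\Psi_m*f)(B_n\ell)\,e^{2\pi i\ell x}$ and $L_m(x)=\sum_\ell\F\Lambda_m(B_n\ell)\,e^{2\pi i\ell x}$.

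Two key ingredients from Section~\ref{Prel} now enter. The vector-valued version of Lemma~\ref{lem:norm1}, applied to $\{\Psi_m*f\}_m$ (all compactly supported in $[-1,2]^d$), yields
\[
\bigl\|\bigl(\sum_m 2^{s|m|_1\theta}|G_m|^\theta\bigr)^{1/\theta}\bigr\|_{L_p([0,1]^d)}\lesssim\|f\|_{\Fspt},
\]
while Lemma~\ref{lem:norm2} combined with Lemma~\ref{lem:Z}(ii) gives the trig-polynomial bound $\|L_m\|_{L_q([0,1]^d)}\lesssim(2^{|m|_1}/n)^{1-1/q}$ for every $q\in[1,\infty]$. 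When $\theta\ge p$, the embedding $\Fspt\hookrightarrow\B^s_{p,\theta}$ from Lemma~\ref{emb} reduces the matter to the already-proven Theorem~\ref{thm:besov}, so I concentrate on the genuinely new case $\theta<p$.

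For $\theta<p$, I combine the two ingredients by pointwise Hölder in $m$ with an auxiliary exponent $\sigma$ and its conjugate $\sigma'$, followed by Hölder in $L_p/L_{p'}$, insisting on $\theta\le\sigma\le p$. The constraint $\sigma\ge\theta$ gives $\ell^\theta\hookrightarrow\ell^\sigma$ pointwise and subsumes the $G_m$-factor under $\|f\|_{\Fspt}$; the constraint $\sigma\le p$ gives $\sigma'\ge p'$ and hence $L_{\sigma'}([0,1]^d)\hookrightarrow L_{p'}([0,1]^d)$, so the $L_m$-factor collapses to $\bigl(\sum_m 2^{-s|m|_1\sigma'}\|L_m\|_{L_{\sigma'}}^{\sigma'}\bigr)^{1/\sigma'}$, which I control by Lemma~\ref{lem:norm2} with $q=\sigma'$. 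The resulting master estimate is
\[
e(Q_n,f)\lesssim\|f\|_{\Fspt}\cdot n^{-1/\sigma}\Bigl(\sum_{|m|_1>r_n}2^{|m|_1\sigma'(1/\sigma-s)}\Bigr)^{1/\sigma'}.
\]
In part~(i), $s>1/\theta$ permits $\sigma=\theta$, for which $\sigma'(1/\sigma-s)<0$; the geometric series is dominated by its $|m|_1=r_n+1$ term, and $r_n\asymp\log n$ yields exactly $n^{-s}(\log n)^{(d-1)(1-1/\theta)}$.

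The real work of the proof lies in parts~(ii) and~(iii), where $s\le 1/\theta$ and hence $\sigma=\theta$ no longer makes the exponent negative. My plan is to take $\sigma$ just above $1/s$, which is admissible because $s>1/p$ gives $1/s<p$ while $s\le 1/\theta$ gives $1/s\ge\theta$. For any fixed $\sigma\in(1/s,p]$ the geometric sum converges with constant blowing up like $(\sigma-1/s)^{-1/\sigma'}$ as $\sigma\searrow 1/s$, producing the shape $n^{-s}(\log n)^{(d-1)(1-1/\sigma)}$. To recover the sharp exponent $(d-1)(1-s)$ in part~(ii), and to extract the characteristic $(\log\log n)^{1-s}$ correction at the critical value $s=1/\theta$ in part~(iii), I must let $\sigma$ approach $1/s$ at a rate tied to $r_n$ and analyse the incomplete-Gamma-type asymptotics of $\sum_{k>r_n}k^{d-1}2^{-k\beta}$ as $\beta\searrow 0$. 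The main obstacle is that the combinatorial weight $|\{m:|m|_1=k\}|\sim k^{d-1}$ can conspire with the near-critical geometric ratio to introduce a spurious extra power of $\log n$ unless the parameter $\sigma$ is tuned carefully: a rough fix gives only $(\log n)^{(d-1)(1-s)+\epsilon}$, and the sharp rate requires either a delicate $\epsilon=\epsilon(n)$ choice or a dyadic splitting of the $|m|_1$-range before invoking the master estimate. Executing this optimization is the technically delicate step of the proof.
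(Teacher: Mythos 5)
Your setup — the Poisson/Calder\'on error representation, the pairing $\sum_m\langle L_m,G_m\rangle$ with $L_m,G_m$ as in \eqref{eq:error}, the use of Lemma~\ref{lem:norm1} for the $G_m$-factor and Lemmas~\ref{lem:norm2},~\ref{lem:Z} for the $L_m$-factor, and the embedding reduction to Theorem~\ref{thm:besov} when $\theta\ge p$ — coincides with the paper's. Your ``master estimate'' with an auxiliary exponent $\sigma\in[\theta,p]$ is correct (the constants from Lemma~\ref{lem:norm2} are uniform in $\sigma$), and taking $\sigma=\theta$ indeed gives part~(i). That much is sound.

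The gap is in parts~(ii) and~(iii), which you explicitly leave unexecuted, and your first proposed remedy does not actually close it. Tuning a \emph{single} global $\sigma=\sigma(n)$ just above $1/s$ cannot reach the stated rate in~(ii): writing $\gamma:=s-1/\sigma>0$ and optimizing
\[
n^{-1/\sigma}\Big(\sum_{k>r_n}k^{d-1}2^{-k\sigma'\gamma}\Big)^{1/\sigma'}\;\asymp\;n^{-s}\,r_n^{(d-1)(1-s)}\cdot r_n^{(d-1)\gamma}\gamma^{-(1-s)}
\]
over $\gamma$ yields $\gamma\asymp 1/\log r_n$ and a leftover factor $(\log\log n)^{1-s}$, so the best the one-parameter version can produce in case~(ii) is $n^{-s}(\log n)^{(d-1)(1-s)}(\log\log n)^{1-s}$, not $n^{-s}(\log n)^{(d-1)(1-s)}$. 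The origin of the loss is the $1/\beta$ factor in $\sum_{k>r_n}k^{d-1}2^{-k\beta}\asymp r_n^{d-1}2^{-r_n\beta}/\beta$ when the geometric ratio $\beta=\sigma'\gamma\to 0$ with $n$. The paper avoids this by not introducing $\sigma$ at all: it H\"olders with $\theta/\theta'$ and $p/p'$ and then \emph{splits the $L_m$-factor} at $|m|_1=r_n+L_n$ with $L_n=(d-1)\log\log n$. On $r_n<|m|_1\le r_n+L_n$ it uses $L_{\theta'}\hookrightarrow L_{p'}$ and Lemma~\ref{lem:norm2} with $q=\theta'$, giving an \emph{increasing} geometric series with ratio $2^{\theta'(1/\theta-s)}$ bounded away from $1$; on $|m|_1>r_n+L_n$ it uses $\ell_{p'}\hookrightarrow\ell_{\theta'}$ and Lemma~\ref{lem:norm2} with $q=p'$, giving a \emph{decreasing} geometric series with ratio $2^{-p'(s-1/p)}$ bounded away from $1$. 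Both pieces then carry only the $(\log n)^{(d-1)(1-s)}$ weight, and the $(\log\log n)^{1-s}$ in~(iii) appears precisely because at $s=1/\theta$ the lower-range ratio degenerates to $1$ and the window of length $L_n$ contributes. Your second hedge (``dyadic splitting'') points in the right direction, but what is needed is this single cut at $r_n+L_n$ with two different exponents, and carrying it out is the part of the proof you have not supplied.
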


\bigskip

Setting $\theta=2$ immediately implies the following important special case.

\begin{cor}\label{cor:sobolev}
For each $1< p<\infty$ and $s>1/p$ the following bounds hold true.
\vspace{2mm}
\begin{enumerate}[\it (i)]
	\item If $s>1/2$ then
		\[
		e(Q_n, \Wo) \;\asymp\; n^{-s}\, (\log n)^{(d-1)/2}\,.
		\]
	\item If $s<1/2$ then 
		\[
		e(Q_n, \Wo) \;\lesssim\; n^{-s}\, (\log n)^{(d-1)(1-s)}\,.
		\]
	\item If $s=1/2$ then 
		\[
		e(Q_n, \Wo) \;\lesssim\; n^{-s}\, (\log n)^{(d-1)/2}\, 
		\sqrt{\log\log n}\,.
		\]
\end{enumerate}
\end{cor}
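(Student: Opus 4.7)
The plan is very short because Corollary~\ref{cor:sobolev} is a direct specialization of Theorem~\ref{thm:TL} to the choice $\theta=2$. The starting point is the identification $\Wo = \mathring{\mathbf F}^s_{p,2}$, which for $1<p<\infty$ is adopted here as the very definition of the Sobolev space (see the remark following Definition~\ref{def:TL}). Modulo this identification, the three assertions of the corollary correspond bijectively to parts (i), (ii) and (iii) of Theorem~\ref{thm:TL}, and the entire argument reduces to substituting $\theta=2$ and simplifying the resulting exponents.

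First I would record the identification $\Wo=\mathring{\mathbf F}^s_{p,2}$ and note the (slightly) more restrictive range $1<p<\infty$ imposed in the corollary; this restriction is exactly what is needed to rewrite $\mathring{\mathbf F}^s_{p,2}$ as $\Wo$ on the left of each estimate. Then, for each part separately, I would substitute $\theta=2$ into the corresponding bound of Theorem~\ref{thm:TL}, check that the hypothesis $s>1/\theta$, $s<1/\theta$, resp.\ $s=1/\theta$ reduces to the stated condition on $s$, and simplify the logarithmic exponents. In part (i) this turns $(d-1)(1-1/\theta)$ into $(d-1)/2$ and preserves the two-sided $\asymp$-relation. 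In part (ii) the exponent $(d-1)(1-s)$ is kept verbatim, while the combination of the standing assumption $s>1/p$ with $s<1/2$ silently forces $p>2$ (so this regime is genuinely nonempty only for $p>2$). In part (iii) the factor $(\log\log n)^{1-s}$ specialises to $\sqrt{\log\log n}$, again only in the range $p>2$.

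The main point is that no new tool beyond Theorem~\ref{thm:TL} and the definition of $\Wo$ is required, so there is essentially no obstacle to overcome. The only items worth double-checking are that the upper and lower bounds in Theorem~\ref{thm:TL}(i) are genuinely available for all $1\le p<\infty$ (so that restricting to $1<p<\infty$ is harmless and the two-sided $\asymp$ is preserved), and that the equivalence of (quasi-)norms between $\Wo$ and $\mathring{\mathbf F}^s_{p,2}$ is indeed with multiplicative constants independent of $f$, so that the worst-case errors $e(Q_n,\Wo)$ and $e(Q_n,\mathring{\mathbf F}^s_{p,2})$ agree up to a constant. Both points are standard and already addressed in the paper, so nothing further is needed.
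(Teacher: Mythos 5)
Your proposal matches the paper exactly: Corollary~\ref{cor:sobolev} is obtained by setting $\theta=2$ in Theorem~\ref{thm:TL} and invoking the identification $\Wo=\mathring{\mathbf F}^s_{p,2}$, which is precisely what the paper does (in a single line). Your additional remarks about the implicit constraint $p>2$ in parts (ii)--(iii) and about the equivalence of norms being uniform are correct and merely make explicit what is left tacit in the paper.
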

\medskip

\begin{rem}\label{rem:Frolov-Ao}
The subclasses $\Ao$ of functions with support inside the unit cube $[0,1]^d$ represent the ``natural domain'' for the Frolov cubature rule. 
This comes from the fact that the proof heavily relies on the use of 
Poisson's summation formula (Corollary~\ref{cor:poisson}), which requires 
that we can replace the summation in \eqref{eq:frolov} over 
$\X_n\cap[0,1)^d$ by a summation over $\X_n$ without changing the value of the sum, 
see~\eqref{eq:Qn}. 
\end{rem}

\begin{rem} Let us give some historical comments on the special case in Corollary \ref{cor:sobolev}. This result
was partially known before and has a long history, starting with the work of Bakhvalov~\cite{Ba63}. 
He proved a version of Corollary~\ref{cor:sobolev}(i) for the Fibonacci cubature rule $\Phi_n$ 
instead of $Q_n$ for the periodic spaces $\Wsp(\mathbb{T}^2)$ in case $s\in \N$ and $d=2$. 
This was extended by Temlyakov to $1<p<\infty$. In fact, Temlyakov~\cite{Te91, Te94} proved 
a version of Corollary \ref{cor:sobolev}(i)--(iii) with $Q_n$ replaced by $\Phi_n$ for
periodic spaces $\Wsp(\mathbb{T}^2)$ in case $d=2$ and $s>1/p$. 
Corollary~\ref{cor:sobolev}(i) in case $d>2$,
$p=2$, and $s\in \N$ is due to Frolov \cite{Fr76} which trivially implies the correct order 
for $p>2$ due to embedding.
For $d>2$, $1<p<\infty$ and $s\in \N$, we refer to Skriganov~\cite{Sk94}. 
All the remaining cases for $d>2$
are new.
In addition, the results for small smoothness, see (ii) and (iii),  have not been known before. 
Also the recent paper Hinrichs, Markhasin,
Oettershagen, T.~Ullrich~\cite{HiMaOeUl14} provides the correct order of 
$\mbox{Int}_n(\Wsp(\tor))$
in case $1<p<2$ and
$1/p<s<2$ with a quasi-Monte Carlo method based on order-$2$ digital nets. For more detailed historical comments see
the monograph/survey Temlyakov \cite{Te93, Te03}. 
\end{rem}

\subsection{The proofs} \label{subsec:proofs}

We begin this section with the derivation of an error formula that 
is the starting point for the proofs in the specific cases.
This explicit formula for $|I(f)-Q_n(f)|$ follows immediately from 
the point-wise version of Poisson's summation formula for general lattices, 
see Corollary~\ref{cor:poisson}.

In this section we consider $f\in\Ao$, i.e.~functions with support in the unit cube. 
Hence, we can rewrite our cubature rule $Q_n$ from \eqref{eq:frolov} as
\begin{equation}\label{eq:Qn}
Q_n(f) \,=\,  \frac{1}{n}\sum_{x\in \X_n\cap[0,1)^d} f(x) 
\,=\,  \det(T_n)\sum_{x\in \X_n} f(x).
\end{equation}

In the sequel we will use the specific kernel from Remark \ref{up} and its tensorized version $\Psi_m$, $m\in
\N_0^d$\,. The corresponding functions  $\Lambda_m$, $m\in\N_0^d$, are given by Calderon's reproducing
formula \eqref{eq:Lambda}, \eqref{eq:calderon}. The construction of these functions, cf.~the proof of
Lemma~\ref{lem:decomp}, 
assures that the functions $\phi_m:=\F[\Lambda_m\ast\Psi_m]$ satisfy 
the assumptions of Corollary~\ref{cor:poisson} and hence,
\[
Q_n(f)
\,=\, \sum_{m\in\N_0^d}\; \sum_{k\in\Z^d} 
\F[\Lambda_m\ast\Psi_m](B_n k)\, \F f(B_n k).
\]
Note that the inner sum is finite and that, actually, the outer sum is defined 
as a certain limit, see Corollary~\ref{cor:poisson}. 
However, we will see that this sum converges absolutely in all cases under consideration. 

Note that $\ls\e^{2\pi i k \cdot}, \e^{2\pi i \ell \cdot} \rs=1$, if $k=\ell$, 
and 0 otherwise, where $\ls\cdot,\cdot\rs$ is usual inner product in $L_2([0,1]^d)$. 
Using this together with $I(f)=\F f(0)$ we obtain
\begin{equation}\label{eq:error}
\begin{split}
|I(f)-Q_n(f)| 
\,&=\, \abs{\sum_{m\in\N_0^d}\,\sum_{k\neq0} \F\Lambda_m(B_n k)\, \F\Psi_m(B_n k)\, \F f(B_n k)}\\
&=\,\abs{\sum_{m\in\N_0^d}\,\sum_{k\neq0}\,\sum_{\ell\neq0} \F\Lambda_m(B_n k)\, 
	\F[\Psi_m\ast f](B_n \ell)\; \l\e^{2\pi i k \cdot}, \e^{2\pi i \ell \cdot} \r}\\
&=\,\abs{\sum_{m\in\N_0^d} \l\sum_{k\neq0} \F\Lambda_m(B_n k)\, \e^{2\pi i k \cdot},
	\sum_{\ell\neq0}\F[\Psi_m\ast f](B_n \ell)\, \e^{2\pi i \ell \cdot} \r}.\\
\end{split}
\end{equation}

Now we have to proceed differently depending on the space under consideration. 
In fact, we will perform in either case H\"older's inequality twice, 
but in a different order.

\subsection*{The result for $\Bo$}

We now prove Theorem~\ref{thm:besov}.

Using~\eqref{eq:error} we obtain by H\"older's inequality
\begin{equation*}\label{eq:error-B}
\begin{split}
|I(f)-Q_n(f)| \,&=\, \abs{\sum_{m\in\N_0^d} \l\sum_{k\neq0} \F\Lambda_m(B_n k)\, \e^{2\pi i k \cdot},
	\sum_{\ell\neq0}\F[\Psi_m\ast f](B_n \ell)\, \e^{2\pi i \ell \cdot} \r}\\
&\le\, \sum_{m\in\N_0^d}\, 
	\Big\|\sum_{k\neq0} \F\Lambda_m(B_n k)\, \e^{2\pi i k \cdot}\Big\|_{L_{p'}([0,1]^d)}\, 
	\Big\|\sum_{\ell\neq0}\F[\Psi_m\ast f](B_n \ell)\, \e^{2\pi i \ell \cdot} \Big\|_{L_{p}([0,1]^d)}\\
\end{split}
\end{equation*}
with $1/p+1/p'=1$.

\medskip

Using Lemma~\ref{lem:norm2} for the first and Lemma~\ref{lem:norm1} 
for the second factor, we obtain
\[
|I(f)-Q_n(f)| 
\,\le\, \sum_{m\in\N_0^d}\, Z_n(m)^{1-1/{p'}} 
		\Big(\frac{M_{B_n,\supp(\Psi_m\ast f)}}{\det(B_n)}\Big)^{1-1/p}\,
		\|\Lambda_m\|_1 \,\|\Psi_m\ast f\|_p
\] 
By construction, the third factor is bounded by a constant. 
The second factor converges, as $n\to\infty$, to 
${\rm vol}_d(\supp(\Psi_m\ast f))^{1-1/p}\le{\rm vol}_d([-1,2]^d)^{1-1/p}$, 
see Remarks~\ref{rem:volume}~\&~\ref{up}. 
Hence, we obtain with Lemma~\ref{lem:Z} that
\begin{equation}\label{eq:error-limit-B}
|I(f)-Q_n(f)| \,\lesssim\, \sum_{\substack{m\in\N_0^d:\\ |m|_1>r_n}}\, 
(2^{|m|_1}/n)^{1/p}\; \|\Psi_m\ast f\|_p
\,=\, n^{-1/p} \sum_{\substack{m\in\N_0^d:\\ |m|_1>r_n}}\, 
2^{|m|_1(1/p-s)}\; 2^{s|m|_1} \|\Psi_m\ast f\|_p
\end{equation}
with $r_n=\log_2(n)-c$ from \eqref{eq:rn}.
Applying H\"older's inequality one more time, with $1/\theta+1/\theta'=1$, 
we finally obtain
\[\begin{split}
e(Q_n,\Bo) \,&\lesssim\, n^{-1/p} 
\Big(\sum_{m\colon |m|_1>r_n}\, 
2^{\theta'|m|_1(1/p-s)}\Big)^{1/\theta'} 
\,<\, n^{-1/p} 
\Big(\sum_{\ell>r_n}\, (\ell+1)^{d-1}
2^{\theta'\ell(1/p-s)}\Big)^{1/\theta'} \\
\,&\lesssim\, n^{-s} (\log n)^{(d-1)(1-1/\theta)},
\end{split}\]
since $s>1/p$. 
This proves Theorem~\ref{thm:besov}.

\subsection*{The result for $\mathring{\mathbf F}_{p,\theta}^s$}

We prove Theorem \ref{thm:TL}. If $s>\max\{1/p,1/\theta\}$ Theorem~\ref{thm:TL},(i) directly follows from Theorem \ref{thm:besov}. In fact, 
in case $p\leq \theta$ we have $\Fspt \hookrightarrow \Bspt$ for all $s$. If $p>\theta$ we use the embedding 
$\mathring{\mathbf F}_{p,\theta}^s \hookrightarrow \mathring{\mathbf B}^s_{\theta,\theta}$ due to the compact support and 
the definition via local means. To apply the results in Theorem \ref{thm:besov} we need $s>1/\theta$.
It remains to deal with the situation $p>\theta$ and $1/p < s \le 1/\theta$. 

Again, let $1/p+1/p'=1/\theta+1/\theta'=1$. We obtain from \eqref{eq:error} 
that
\begin{equation*}
 \begin{split}
|I(f)&-Q_n(f)| \,=\, \abs{\sum_{m\in\N_0^d} \l\sum_{k\neq0} \F\Lambda_m(B_n k)\, \e^{2\pi i k \cdot},
	\sum_{\ell\neq0}\F[\Psi_m\ast f](B_n \ell)\, \e^{2\pi i \ell \cdot} \r}\\
&=\, \abs{\sum_{m\in\N_0^d} \int_{[0,1]^d}\Big(
	\sum_{k\neq0} \F\Lambda_m(B_n k)\, \e^{2\pi i k x}\Big)\,
	\overline{\Big(\sum_{\ell\neq0}\F[\Psi_m\ast f](B_n \ell)\, \e^{2\pi i \ell x}\Big)} \dint x}
 \end{split}
\end{equation*}
Interchanging summation and integration and applying H\"older's inequality twice yields 
\begin{equation*}\label{eq:error-F}
\begin{split}
|I(f)-Q_n(f)|&\le\, \int_{[0,1]^d}\Big(\sum_{m\in\N_0^d} 2^{-\theta' s |m|_1} \Big|
	\sum_{k\neq0} \F\Lambda_m(B_n k)\, \e^{2\pi i k x}\Big|^{\theta'}\Big)^{1/\theta'}\,\\
	&\qquad\qquad\qquad
	\cdot\Big(\sum_{m\in\N_0^d} 2^{\theta s |m|_1} \Big|\sum_{\ell\neq0}\F[\Psi_m\ast f](B_n \ell)\, \e^{2\pi i \ell x}\Big|^\theta\Big)^{1/\theta} \dint x\\
&\le\, \Big\|\Big(\sum_{m\in\N_0^d} 2^{-\theta' s |m|_1} \Big|
	\sum_{k\neq0} \F\Lambda_m(B_n k)\, \e^{2\pi i k x}\Big|^{\theta'}\Big)^{1/\theta'}\Big\|_{L_{p'}([0,1]^d)} \\
	&\qquad\qquad\qquad
	\cdot\Big\|\Big(\sum_{m\in\N_0^d} 2^{\theta s |m|_1} \Big|\sum_{\ell\in\Z^d}\F[\Psi_m\ast f](B_n \ell)\, \e^{2\pi i \ell x}
	\Big|^\theta\Big)^{1/\theta} \Big\|_{L_{p}([0,1]^d)}\,. \\
\end{split}
\end{equation*}

\noindent At first, we bound the second factor. 
For this recall that the supports of $\Psi_m\ast f$ are subsets 
of $[-1,2]^d$ and set $M_n=M_{B_n,[-1,2]^d}$ in Lemma~\ref{lem:norm1}.
Hence, we obtain from Lemma~\ref{lem:norm1} (with $f_m=2^{s|m|_1}\Psi_m\ast f$) that
\[\begin{split}
&\Big\|\Big(\sum_{m\in\N_0^d} 2^{\theta s |m|_1} \Big|\sum_{\ell\in\Z^d}\F[\Psi_m\ast f](B_n \ell)\, \e^{2\pi i \ell \cdot}
	\Big|^\theta\Big)^{1/\theta} \Big\|_{L_{p}([0,1]^d)}\\
&\qquad\qquad\qquad\le\, \bigl(\det(T_n)\,M_n\bigr)^{1-1/p}\Big\|\Big(\sum_{m\in\N_0^d} 2^{\theta s |m|_1} 
	\Big|\Psi_m\ast f \Big|^\theta\Big)^{1/\theta} \Big\|_{p} 
\,\lesssim\, \|f\|_{F_{p,\theta}^s}.
\end{split}\]
%
It remains to bound the first factor. 
For this let
\[
\wt\Lambda_{m,n}(x) \,:=\,\sum_{k\neq0} \F\Lambda_m(B_n k)\, \e^{2\pi i k x}
\]
and note that $p>\theta$ implies $p'<\theta'$. 
 
We prove the upper bound by splitting the sum 
into two parts. This approach was already used in \cite{Te93} to prove the 
result for the Fibonacci cubature rule in $\mathbf{W}_p^s(\mathbb{T}^2)$.
Let $L_n:=(d-1)\log\log n$ and bound the first factor from above by
\[\begin{split}
&\Big\|\Big(\sum_{m\colon |m|_1\le r_n+L_n} 2^{-\theta' s |m|_1} \Big|
	\wt\Lambda_{m,n}(\cdot)\Big|^{\theta'}
	\Big)^{1/\theta'}\Big\|_{L_{p'}([0,1]^d)} \\
&\qquad\qquad\qquad\qquad\qquad +\; \Big\|\Big(\sum_{m\colon |m|_1> r_n+L_n} 2^{-\theta' s |m|_1} \Big|
	\wt\Lambda_{m,n}(\cdot)\Big|^{\theta'}
	\Big)^{1/\theta'}\Big\|_{L_{p'}([0,1]^d)}. \\
\end{split}\]
Using $p'\le\theta'$ and $s<1/\theta$ we use Lemma~\ref{lem:norm2} and 
Lemma~\ref{lem:Z} 
to bound the first summand by
\[\begin{split}
\Big(\sum_{m\colon |m|_1\le r_n+L_n} 2^{-\theta' s |m|_1} 
	\Big\|\wt\Lambda_{m,n}\Big\|_{L_{\theta'}([0,1]^d)}^{\theta'}
	\Big)^{1/\theta'}
\,&\lesssim\, n^{-1/\theta} \Big(\sum_{\ell=r_n}^{r_n+L_n} (\ell+1)^{d-1}\, 
	2^{-\theta' \ell(s-1/\theta)} \Big)^{1/\theta'}\\
\,&\lesssim\, n^{-s}\, (\log n)^{(d-1)(1-s)}.
\end{split}\]
In the case $s=1/\theta$ this sum is bounded by
$n^{-s}\, (\log n)^{(d-1)(1-s)}\, (\log\log n)^{(1-s)}$.

To bound the second summand we replace the $\ell_{\theta'}$-norm inside by 
a $\ell_{p'}$-norm. We obtain for $s>1/p$ again by  Lemma~\ref{lem:norm2} and 
Lemma~\ref{lem:Z} the upper bound
\[\begin{split}
\Big(\sum_{m\colon |m|_1> r_n+L_n} 2^{-p' s |m|_1} 
	\Big\|\wt\Lambda_{m,n}\Big\|_{L_{p'}([0,1]^d)}^{p'}
	\Big)^{1/p'}
\,&\lesssim\, n^{-1/p} \Big(\sum_{\ell=r_n+L_n}^{\infty} (\ell+1)^{d-1}\, 
	2^{-p' \ell(s-1/p)} \Big)^{1/p'}\\
\,&\lesssim\, n^{-s}\, (\log n)^{(d-1)(1-s)}.
\end{split}\]
This finally proves Theorem~\ref{thm:TL}.

\medskip

\begin{rem}\label{rem:universal}
It is also possible to study a more general scale of function classes, 
where the mixed smoothness $s$ is replaced by a mixed smoothness vector $\bar{s}=(s_1,\dots,s_d)$. In a way, one
assumes a bounded mixed smoothness with different derivatives in different directions. Note, that this framework is
different from the classical ``anisotropic'' setting in \cite[Chapter IV.6]{Te93} and \cite[Sect.\ 4.3]{Nik75}.
Our definition of the spaces, see Definitions~\ref{def:besov}~\&~\ref{def:TL}, 
can be adopted to this setting by replacing the weights $2^{s|m|_1 \theta}$ 
there by $2^{(s_1 m_1+\dots+s_d m_d)\theta}$. We denote these spaces by 
$\T^{\bar s}_{p,\theta}$. With the standard technique from \cite[Lem.\ C]{D} in combination with our proof
technique in Subsection \ref{subsec:proofs} one can show for $1\leq p<\infty$, $1\leq \theta \leq \infty$ and
$s>\max\{1/p,1/\theta\}$ the relation
\[
e(Q_n, \mathring{\T}^{\bar s}_{p,\theta}) \;\asymp\; n^{-s_{\min}}\, (\log n)^{(\nu-1)(1-1/\theta)}
\]
where $s_{\text{min}}:=\min_i s_i$ and $\nu:=\#\{j: s_j=s_{\text{min}}\}$. Note, that there is no $d$-dependence in the
rate anymore. The corresponding results for Besov spaces are stated in~\cite{Du2}.
\end{rem}


\section{Quasi-Banach and limiting cases}\label{sec:quasi}

In this section we deal with the remaining cases of the Besov and 
Triebel-Lizorkin scales, that are not treated in the previous sections. 
We are interested in numerical integration and hence only in classes 
of continuous functions. Besides the quasi-Banach cases $\min\{p,\theta\}<1$ with 
$s>1/p$ we will also consider the limiting case $s=1/p$. 
In the latter case additional assumption are needed to assure continuity.


\subsection{The situation $\min\{p, \theta\}<1$ and $s>1/p$} \label{subsec:quasi}

In this section we deal with the classes $\Bo$ and $\Fo$ with $p,\theta<1$, 
i.e.~the quasi-Banach cases. 
Here, $p<1$ affects the asymptotical error order negatively, 
while $\theta<1$ does not.
The presented lower bounds are given in the upcoming Section~\ref{sec:lower}. 

\begin{cor}\label{cor:small_t} Let $1\le p\le\infty$ (with $p<\infty$ in 
the \T-case), $0<\theta<1$ and $s>1/p$.\\
{\em (i)} Then
\[
e(Q_n, \Bo) \,\asymp\, n^{-s}.
\]
{\em (ii)} If $s\ge 1$ then 
\[
e(Q_n, \Fo) \,\asymp\, n^{-s}.
\]
{\em (iii)} If $1/p<s< 1$ then 
\[
e(Q_n, \Fo) \,\lesssim\, n^{-s}\, (\log n)^{(d-1)(1-s)}\,.
\]
\end{cor}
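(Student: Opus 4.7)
The plan is to derive all three upper bounds by a single reduction to the already-proven Banach-case results, Theorems~\ref{thm:besov} and~\ref{thm:TL}, via an elementary $\theta$-monotonicity of the fine index. The matching lower bounds hidden in the $\asymp$ of (i) and (ii) are deferred to Section~\ref{sec:lower}.

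The key observation is that for $0<\theta\le 1$ and any nonnegative sequence $(a_m)_{m\in\N_0^d}$, the subadditivity of $t\mapsto t^\theta$ on $[0,\infty)$ gives $(\sum_m a_m)^\theta \le \sum_m a_m^\theta$, i.e.\ $\|(a_m)\|_{\ell_1}\le \|(a_m)\|_{\ell_\theta}$. Taking $a_m=2^{s|m|_1}\|\Psi_m\ast f\|_p$ yields the Besov embedding $\Bo\hookrightarrow \mathring{\mathbf{B}}^s_{p,1}$, and applying the same inequality pointwise to $a_m(x)=2^{s|m|_1}|\Psi_m\ast f(x)|$ and then taking the outer $L_p$-norm yields the Triebel-Lizorkin embedding $\Fo\hookrightarrow \mathring{\mathbf{F}}^s_{p,1}$. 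The embedding constants do not depend on $n$, so each embedding transfers the worst-case error estimate directly.

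Then I would invoke the two main theorems at $\theta=1$. For (i), Theorem~\ref{thm:besov} gives $e(Q_n,\mathring{\mathbf{B}}^s_{p,1})\asymp n^{-s}(\log n)^{0}=n^{-s}$, and the Besov embedding transfers this to $\Bo$. For (iii), the hypothesis $1/p<s<1=1/\theta$ places us in the small-smoothness case of Theorem~\ref{thm:TL}(ii), which yields exactly the claimed $n^{-s}(\log n)^{(d-1)(1-s)}$ for $\mathring{\mathbf{F}}^s_{p,1}$, hence for $\Fo$. For (ii), I would split at $s=1$: if $s>1$, then $s>\max\{1/p,1/\theta\}=1$ and Theorem~\ref{thm:TL}(i) yields $n^{-s}$; if $s=1$ (which, together with $s>1/p$, forces $p>1$), then we are in the borderline $s=1/\theta=1$ case of Theorem~\ref{thm:TL}(iii), where both factors $(\log n)^{(d-1)(1-s)}$ and $(\log\log n)^{1-s}$ collapse to $1$, again yielding $n^{-1}$. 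There is no serious obstacle here; the only delicate point is the endpoint $s=1$ in (ii), where one must check that the logarithmic factors from Theorem~\ref{thm:TL}(iii) are trivial.
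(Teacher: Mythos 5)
Your proof is correct and takes essentially the same route as the paper: embed $\Ao\hookrightarrow\mathring{\mathbf A}^s_{p,1}$ for $\theta\le 1$ and invoke Theorems~\ref{thm:besov} and~\ref{thm:TL} at $\theta=1$, with the lower bounds deferred to Section~\ref{sec:lower}. The paper states this in one line; you merely spell out the (correct) elementary subadditivity argument behind the embedding and the (correct) case split at $s=1$ for part (ii).
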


\bproof
The upper bounds follow from the embeddings 
$\A^s_{p,\theta}\hookrightarrow\A^s_{p,1}$ for $\theta<1$, 
together with Theorems~\ref{thm:besov}~\&~\ref{thm:TL}.\\
\eproof

\begin{cor}\label{cor:small_p} Let $0<p<1$, $0<\theta \leq  \infty$ and $s>1/p$.\\
{\em (i)} Then 
\[
e(Q_n, \Bo) \,\asymp\, n^{-s+1/p-1} (\log n)^{(d-1)(1-1/\theta)_+}\,,
\]
{\em (ii)} and 
\[
e(Q_n, \Fo) \,\asymp\, n^{-s+1/p-1}.
\]
\end{cor}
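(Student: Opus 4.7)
The lower bounds belong to Section~\ref{sec:lower} (they will be recorded in Theorem~\ref{thm:lb}), so I concentrate on the upper bounds. The uniform strategy is to reduce the quasi-Banach situation $p<1$ to the Banach case $p=1$ already settled by Theorem~\ref{thm:besov}, using the Sobolev-type diagonal embeddings of Lemma~\ref{emb}. Throughout, set $s':=s-1/p+1$; the hypothesis $s>1/p$ gives $s'>1$, so $\mathbf{B}^{s'}_{1,\theta}$ lies in the range to which Theorem~\ref{thm:besov} applies. Note that all embeddings below are realised by the identity on $\mathcal{S}'(\R^d)$ and therefore preserve the support condition, restricting to the corresponding embeddings of the subspaces $\Ao$.

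For part~(i) with $\theta\ge 1$, Lemma~\ref{emb}(ii) applied with $u=1$ gives $\Bspt\hookrightarrow\mathbf{B}^{s'}_{1,\theta}$, so that $e(Q_n,\Bo)\lesssim e(Q_n,\mathring{\mathbf{B}}^{s'}_{1,\theta})$. Theorem~\ref{thm:besov} then yields the rate $n^{-s'}(\log n)^{(d-1)(1-1/\theta)}=n^{-s+1/p-1}(\log n)^{(d-1)(1-1/\theta)}$, matching the claim. For $\theta<1$, I first insert the monotonicity step $\Bspt\hookrightarrow\mathbf{B}^s_{p,1}$ and then compose with the same diagonal embedding at fine index~$1$; this yields $n^{-s+1/p-1}$, again matching the claim because $(1-1/\theta)_+=0$.

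For part~(ii), I invoke the Jawerth-Franke embedding Lemma~\ref{emb}(iii) with $u=1>p$, obtaining $\Fspt\hookrightarrow\mathbf{B}^{s'}_{1,p}$. Since $p<1$, monotonicity of Besov spaces in the fine index gives $\mathbf{B}^{s'}_{1,p}\hookrightarrow\mathbf{B}^{s'}_{1,1}$. Composing these two embeddings and applying Theorem~\ref{thm:besov} on $\mathring{\mathbf{B}}^{s'}_{1,1}$ produces
\[
e(Q_n,\Fo)\,\lesssim\, e(Q_n,\mathring{\mathbf{B}}^{s'}_{1,1})\,\asymp\, n^{-s'}\,=\, n^{-s+1/p-1},
\]
as required.

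The proof is essentially a composition of the Banach-case Theorem~\ref{thm:besov} with known embeddings. The main non-trivial ingredient is the Jawerth-Franke embedding in the mixed-smoothness quasi-Banach setting (a deep result of Hansen-Vyb\'iral referenced in the discussion after Lemma~\ref{emb}); once this is taken for granted, the argument is a short bookkeeping exercise, the only sanity check being that the identity-realised embeddings do preserve the support condition so that they descend to $\Ao$.
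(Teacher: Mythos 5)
Your proof is correct and is essentially the paper's argument: use a Sobolev-type diagonal embedding (realised by the identity, hence preserving the support condition) to reduce to the Banach endpoint $p=1$, then quote the known rate there. The only genuine deviation is in part (ii): you route $\Fo$ through the Jawerth-Franke embedding $\T^s_{p,\theta}\hookrightarrow\B^{s'}_{1,p}$ of Lemma~\ref{emb}(iii) followed by fine-index monotonicity $\B^{s'}_{1,p}\hookrightarrow\B^{s'}_{1,1}$, whereas the paper invokes the $\T$-to-$\T$ diagonal embedding $\T^s_{p,\theta}\hookrightarrow\T^{s'}_{1,1}$ from Lemma~\ref{emb}(ii) (the target fine index being free there) and then uses the trivial identification $\T^{s'}_{1,1}=\B^{s'}_{1,1}$. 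Both chains land in $\mathring{\mathbf{B}}^{s'}_{1,1}$ and both are legitimate; the paper's route is marginally more economical, since it bypasses the deeper Jawerth-Franke result that you yourself single out as the heavy ingredient. A small remark on part (i): when $\theta<1$, Theorem~\ref{thm:besov} as stated only covers $\theta\ge 1$, so after the diagonal embedding one should formally quote Corollary~\ref{cor:small_t} at $p=1$ rather than Theorem~\ref{thm:besov}; your preliminary monotonicity step $\Bspt\hookrightarrow\mathbf{B}^s_{p,1}$ avoids this issue cleanly, so that part of your write-up is fine as it stands.
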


\bproof The stated bounds are a direct consequence  of 
Theorem~\ref{thm:besov} and Theorem~\ref{thm:TL}. 
In fact, from Lemma~\ref{emb} we know that we have for $p<1$ the embeddings 
\[
\Bo \hookrightarrow \mathring{\mathbf{B}}_{1,\theta}^{s+1-1/p}
\quad\text{ and }\quad 
\Fo \hookrightarrow  \mathring{\mathbf{F}}_{1,1}^{s+1-1/p} = \mathring{\mathbf{B}}_{1,1}^{s+1-1/p}. 
\]
\eproof

\begin{rem} In contrast to the case $p\ge1$, there is no dependence on 
$\theta$ in (ii). In particular, there is no effect of 
``small smoothness''.
\end{rem}

\subsection{Limiting cases}\label{subsec:limit}

In this section we deal with the limiting situation $s=1/p$. 
When $\theta \leq 1$ in the {\bf B}-case or $p\leq 1$ in the {\bf F}-case 
we have the continuous embedding $\Aspt(\R^d) \hookrightarrow C(\R^d)$. 

\begin{thm}\label{thm:limit}
{\em (i)} Let $0< p\le\infty$ and $0<\theta \leq 1$. Then
\[
e(Q_n, \mathring{\B}_{p,\theta}^{1/p}) \,\asymp\, n^{-1/\max\{p,1\}}.
\]
{\em (ii)} Let $0<p<1$ and $0<\theta\leq \infty$. Then 
\[
e(Q_n, \mathring{\mathbf{F}}_{p,\theta}^{1/p}) \,\asymp\, n^{-1}.
\]

\end{thm}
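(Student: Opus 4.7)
\textbf{Proof proposal for Theorem \ref{thm:limit}.} The plan is to derive the upper bounds by reducing, through the embeddings of Lemma \ref{emb}, to a single base case which is a minor extension of the proof of Theorem \ref{thm:besov} to the critical regularity $s=1/p$ with $\theta=1$. The matching lower bounds are then obtained via fooling-bump constructions in $[0,1]^d$ avoiding the Frolov nodes: a single bump suffices for (ii) and for (i) when $p\leq 1$, whereas a superposition of $\asymp n$ bumps is required in (i) when $p>1$.

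\textbf{Upper bounds.} The central observation is that estimate \eqref{eq:error-limit-B} obtained in the proof of Theorem \ref{thm:besov} is valid for every $s\geq 1/p$, and that only the subsequent H\"older step required strict inequality. Specializing to $s=1/p$, $\theta=1$, the weight $2^{|m|_1(1/p-s)}$ collapses to $1$ and the tail is bounded directly by the full norm:
\[
|I(f)-Q_n(f)| \,\lesssim\, n^{-1/p}\sum_{|m|_1>r_n} 2^{|m|_1/p}\,\|\Psi_m\ast f\|_p \,\leq\, n^{-1/p}\,\|f\|_{\mathbf{B}^{1/p}_{p,1}}.
\]
This yields the base case $e(Q_n,\mathring{\mathbf{B}}^{1/p}_{p,1}) \lesssim n^{-1/p}$ for every $1\leq p<\infty$. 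The remaining parameter regions reduce to it via short chains of embeddings from Lemma \ref{emb}. For (i) with $p\geq 1$ and $\theta\leq 1$ one uses $\mathring{\mathbf{B}}^{1/p}_{p,\theta}\hookrightarrow\mathring{\mathbf{B}}^{1/p}_{p,1}$ (since $\ell_\theta\hookrightarrow\ell_1$ for $\theta\leq 1$); for (i) with $p<1$ one uses the diagonal embedding $\mathring{\mathbf{B}}^{1/p}_{p,\theta}\hookrightarrow\mathring{\mathbf{B}}^1_{1,\theta}\hookrightarrow\mathring{\mathbf{B}}^1_{1,1}$ from Lemma \ref{emb}(ii) composed with $\ell_\theta\hookrightarrow\ell_1$; and for (ii) one invokes the Jawerth--Franke embedding of Lemma \ref{emb}(iii) with $s=1/p$, $t=1$, $u=1$ to get $\mathring{\mathbf{F}}^{1/p}_{p,\theta}\hookrightarrow\mathring{\mathbf{B}}^1_{1,p}\hookrightarrow\mathring{\mathbf{B}}^1_{1,1}$, the last step requiring $p\leq 1$.

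\textbf{Lower bounds.} For (ii) and for (i) when $p\leq 1$, a single fooling bump suffices: since $|\X_n\cap[0,1)^d|\asymp n$, a volume/pigeonhole argument produces an axis-parallel dyadic box $R\subset[0,1]^d$ of volume $\asymp 1/n$ disjoint from $\X_n$, and a smooth bump $\phi_R$ supported in $R$ of height $\asymp 1$ satisfies $|\int\phi_R|\asymp 1/n$ together with $\|\phi_R\|_{\mathring{\mathbf{A}}^{1/p}_{p,\theta}}\asymp 1$ (by the standard dominating-mixed atom scaling at level $|m|_1\asymp\log_2 n$: for a bump of volume $V$ and height $1$ one has $\|\Psi_m\ast\phi_R\|_p\asymp V^{1/p}$ at the dominant level, so the Besov weight $2^{s|m|_1}\asymp V^{-s}$ makes the norm $\asymp V^{1/p-s}$, which equals $1$ when $s=1/p$). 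This yields the lower bound $\gtrsim 1/n = n^{-1/\max\{p,1\}}$. For (i) with $p>1$ one superposes $N\asymp n$ pairwise disjoint translates $\phi_{R_j}$ at a common scale $|m|_1\asymp\log_2 n$, using the equidistribution of $\X_n$ to secure enough empty boxes; disjointness of the supports together with the single dominant scale gives $\|\sum_{j=1}^N\phi_{R_j}\|_{\mathbf{B}^{1/p}_{p,\theta}}\asymp N^{1/p}$ by $L_p$-additivity on disjoint supports, while $|\int\sum_j\phi_{R_j}|\asymp N/n$, producing the ratio $N^{1-1/p}/n\asymp n^{-1/p}$ at the optimal $N\asymp n$.

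\textbf{Main obstacle.} The genuine subtlety is the multi-bump lower bound for (i) when $p>1$: the single-bump ansatz delivers only the weaker rate $n^{-1}$, so one must exhibit $\asymp n$ pairwise disjoint translates of a bump, each avoiding $\X_n$, that combine additively in the $\theta\leq 1$ Besov norm with the correct $N^{1/p}$ scaling. Exploiting the lattice structure of $\X_n$ via Lemma \ref{lem:B} to guarantee the required geometric packing, and controlling the interaction between the $\theta$-quasi-norm and the multiscale nature of the atoms, is the technical heart of the argument.
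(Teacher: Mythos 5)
Your upper-bound strategy is essentially the paper's: specialize \eqref{eq:error-limit-B} to $s=1/p$, $\theta=1$ (where the weight $2^{|m|_1(1/p-s)}$ is $1$ and the tail sum is absorbed into the full norm), and reduce all other parameter constellations to this base case via the embeddings of Lemma~\ref{emb}. The only cosmetic difference is in part (ii): the paper uses the diagonal embedding $\mathring{\T}^{1/p}_{p,\theta}\hookrightarrow\mathring{\T}^1_{1,1}=\mathring{\B}^1_{1,1}$ from Lemma~\ref{emb}(ii), whereas you route through Jawerth--Franke, Lemma~\ref{emb}(iii); both are correct since $p<1$.

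For the lower bounds you correctly identify the bump/atomic-decomposition strategy (the paper delegates this to Theorem~\ref{thm:lb}), but your ``main obstacle'' paragraph misdiagnoses where the difficulty lies, and the proposed remedy would not even be applicable. First, Lemma~\ref{lem:B} concerns the \emph{dual} lattice $B_n(\Z^d)$ and powers the upper-bound estimate; it says nothing useful about which primal cells avoid $\X_n$. Second, and more importantly, no lattice structure is needed at all: the packing is a pigeonhole count valid for \emph{arbitrary} node sets $X_n$ of size $n$. Fix one $j\in\N^d$ with $|j|_1=m+1\asymp\log_2 n +1$; there are $2^{|j|_1}\asymp 2n$ disjoint dyadic boxes $Q_{j,k}\subset[0,1]^d$ at this scale, so at least $\gtrsim n$ of them contain no node. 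This is exactly what the paper does in case~(b) of the proof of Theorem~\ref{thm:lb}. Third, working with a \emph{single} $j$ (rather than summing over all $j$ with $|j|_1=m+1$) collapses the outer $\ell_\theta$-sum to one term, so the interaction between the $\theta$-quasi-norm and the multiscale structure, which you flag as the technical heart, simply never arises. With these corrections your lower-bound sketch coincides with the paper's Theorem~\ref{thm:lb}, which also confirms your scaling computations: $N\asymp n$ disjoint level-$j$ atoms give $\|\cdot\|_{\B^{1/p}_{p,\theta}}\asymp N^{1/p}$ and integral $\asymp N\cdot 2^{-|j|_1}\asymp N/n$, optimized at $N\asymp n$ to yield $n^{-1/p}$; and for $p\le 1$ a single atom already gives $n^{-1}$.
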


\bproof Due to the embeddings $\mathring{\mathbf{F}}_{p,\theta}^{1/p},\mathring{\B}_{p,\theta}^{1/p} \hookrightarrow
\mathring{\mathbf{F}}^1_{1,1} = \mathring{\mathbf{B}}^1_{1,1}$ if $p<1$ it suffices to prove (i) in case $1\leq p\leq
\infty$, $s=1/p$ and $\theta = 1$.
It follows the same line as the proof in the case 
$s>1/p$. In fact, \eqref{eq:error-limit-B} for $s=1/p$ shows 
\[
|I(f)-Q_n(f)| \,\lesssim\, n^{-1/p} \sum_{\substack{m\in\N_0^d:\\ |m|_1>r_n}}\, 
2^{|m|_1/p} \|\Psi_m\ast f\|_p 
\,\le\, n^{-1/p}\,\|f\|_{\B_{p,1}^{1/p}}.
\]
\eproof

Unfortunately, our proof techniques do not seem to work for $p=1$ in (ii). The
following result, which is probably not sharp, follows from the 
embedding in Lemma~\ref{emb}(iii), together with Theorem \ref{thm:limit}(i). We
conjecture that $e(Q_n, \mathring{\T}_{1,\theta}^{1})\asymp n^{-1}(\log
n)^{(d-1)(1-1/\theta)}$. We leave this as an open problem.
\begin{cor} Let $0<\theta\leq \infty$. For any $\eps>0$ there is a constant
$c_{\eps}$ such that 
\[
  e(Q_n, \mathring{\T}_{1,\theta}^{1}) \lesssim c_{\eps}n^{-(1-\eps)}\,.
\]
\end{cor}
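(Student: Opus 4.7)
The plan is to transfer the problem from $\mathring{\T}^{1}_{1,\theta}$ into a Besov space on the diagonal line $s=1/p$ with third index $1$, to which Theorem~\ref{thm:limit}(i) applies, and then optimize the embedding parameter.

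First, I would invoke the Franke-type embedding in Lemma~\ref{emb}(iii) in the form
$$
\T^s_{p,\theta}\hookrightarrow\B^t_{u,p}\qquad(0<p<u\le\infty,\ s-1/p=t-1/u).
$$
Taking $s=1$, $p=1$ and any $u>1$ forces $t=1/u$, so that
$$
\mathring{\T}^{1}_{1,\theta}\hookrightarrow\mathring{\B}^{1/u}_{u,1}.
$$
The support restriction to $[0,1]^d$ is preserved by the embedding, which is why the circle survives the inclusion. The embedding constant $C_u$ depends on $u$ but not on $n$, which is all we need.

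Next, since the image space has third index $1$, I would apply Theorem~\ref{thm:limit}(i) with the parameter $p$ there replaced by $u>1$ (so $\max\{u,1\}=u$) to obtain
$$
e\bigl(Q_n,\mathring{\B}^{1/u}_{u,1}\bigr)\asymp n^{-1/u}.
$$
Combining this with the continuous embedding gives
$$
e\bigl(Q_n,\mathring{\T}^{1}_{1,\theta}\bigr)\;\le\;C_u\cdot e\bigl(Q_n,\mathring{\B}^{1/u}_{u,1}\bigr)\;\lesssim\;C_u\,n^{-1/u}.
$$

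Finally, given $\eps>0$, I would choose $u=1/(1-\eps)>1$, so that $1/u=1-\eps$, and set $c_\eps:=C_u$; this yields the claimed bound $e(Q_n,\mathring{\T}^{1}_{1,\theta})\lesssim c_\eps n^{-(1-\eps)}$. There is no real obstacle here: the only subtlety is making sure the Jawerth--Franke embedding is applied with the correct matching of indices ($t=1/u$ lies exactly on the diagonal so that Theorem~\ref{thm:limit}(i) is applicable), and acknowledging that the constant $c_\eps$ must blow up as $\eps\to 0^+$ (reflecting the loss $C_u\to\infty$ as $u\to 1^+$), which is precisely why we cannot reach the conjectured endpoint rate $n^{-1}(\log n)^{(d-1)(1-1/\theta)}$ by this route.
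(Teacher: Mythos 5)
Your proposal matches the paper's own proof exactly: the paper likewise applies the Jawerth--Franke embedding $\mathring{\T}^1_{1,\theta}\hookrightarrow\mathring{\B}^{1/p}_{p,1}$ from Lemma~\ref{emb}(iii) for arbitrary $p>1$ and then invokes Theorem~\ref{thm:limit}(i) to obtain the rate $n^{-1/p}$, letting $p\to1^+$ to eat the $\eps$. Your write-out (with $u$ in place of $p$) is simply a more explicit version of the same argument, including the correct observation that the constant must blow up as $\eps\to0^+$.
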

\bproof For an arbitrary $p>1$ we have the embedding
$\mathring{\T}^1_{1,\theta} \hookrightarrow \mathring{\B}^{1/p}_{p,1}$, 
see Lemma~\ref{emb}(iii). By Theorem \ref{thm:limit} we obtain the rate
$n^{-1/p}$.\\
\eproof

\section{Lower bounds} \label{sec:lower}

The lower bounds that we want to present are valid for \emph{arbitrary} 
cubature formulas. For this we study the quantity $\mbox{Int}_n(\mathbf{F}_d)$ 
from \eqref{eq:minimal} for the spaces $\mathbf{F}_d=\Aspt$.
There are already lower bounds 
for Besov classes in the literature see \cite{Te90} and the recent works \cite{DU14}, \cite{Te14}. None of those
references gives
lower bounds for $p<1$. We will provide them using an approach which is close to the one in \cite{DU14}. 
We will use the modern tool
of atomic decompositions \cite{Vyb06}, see Section~\ref{atomic} below, to construct appropriate fooling functions.

\subsection{Atomic decomposition}
\label{atomic}

We will describe the notion of an atom first. For $j\in \N_0^d$ and $k\in \Z^d$ let $Q_{j,k}$ denote the cube with center $(2^{-j_1}m_1,...,2^{-j_d}m_d)$ and with sides parallel to the coordinate axes of length $2^{-j_1},...,2^{-j_d}$\,. For $\gamma>0$ we denote with $\gamma Q_{j,k}$ the cube concentric with $Q_{j,k}$ with sides also parallel to the axes and length $\gamma 2^{-j_1},...,\gamma 2^{-j_d} $\,.

\begin{defi}\label{atom} Let $K \in \N_0$, $L+1 \in \N_0$ and $\gamma>1$. A $K$-times differentiable complex-valued function $a_{j,k}$ is called $(K,L)$-atom centered at $Q_{j,k}$ if \\
{\em (i)} $\supp\, a_{j,k} \subset \gamma Q_{j,k}\,,$\\
{\em (ii)} $|D^{\alpha}a_{j,k}(x)| \leq 2^{\alpha\cdot j}\quad,\quad$ for all $\alpha = (\alpha_1,...,\alpha_d)\in \N_0^d,~0\leq \alpha_i\leq K$\,,\\
{\em (iii)} and there are the coordinate-wise moment conditions
$$
    \int_{-\infty}^{\infty}x_i^{\ell} a_{j,k}(x) dx_i = 0 \quad\mbox{if}\quad i=1,...,d,~\ell=0,...,L,~j_i\geq 1\,.
$$
\end{defi}

\noindent The following Proposition is due to Vyb\'iral \cite{Vyb06}. Recall, that $\sigma_p:=\max\{0,1/p-1\}$ for $0<p<\infty$\,.

\begin{prop} \label{atomicdec}Let $0<p,\theta \leq \infty$ and $r\in \R$. Fix $K\in \N_0$ and $L+1 \in \N_0$ with 
$$
    K \geq (1+\lfloor r\rfloor)_+\quad \mbox{and}\quad L\geq \max\{-1,\lfloor \sigma_p - r\rfloor\}\,.
$$
If $\{\lambda_{j,k}\}_{j,k}$ is a sequence of complex-valued coefficients and $\{a_{j,k}(x)\}_{j,k}$ a collection of $(K,L)$-atoms centered at $Q_{j,k}$ then the function  
\begin{equation}\label{tf}
	f:=\sum\limits_{j\in \N_0^d} \sum\limits_{k\in \Z^d} \lambda_{j,k}a_{j,k}(x)
\end{equation}
belongs to $\Bspt$ if the right-hand side in \eqref{flelambda} below is finite. Then it holds 
\begin{equation}\label{flelambda}
      \|f\|_{\Bspt} \lesssim \Big(\sum\limits_{j\in \N_0^d} 2^{|j|_1(r-1/p)\theta}\Big[\sum\limits_{k\in \Z^d}|\lambda_{j,k}|^{p}\Big]^{\theta/p}\Big)^{1/\theta}\,.
\end{equation}

\end{prop}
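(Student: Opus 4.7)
My plan is to deduce the bound from the local means characterization of $\Bspt$ given in Definition~\ref{def:besov}: if we can control $\|\Psi_m\ast f\|_p$ for each $m\in\N_0^d$, then summing with the weights $2^{s|m|_1\theta}$ yields the Besov norm. Writing $f=\sum_{j,k}\lambda_{j,k}a_{j,k}$ and interchanging the convolution with the sum (justified by truncation plus a density argument using the fact that the right-hand side of~\eqref{flelambda} is assumed finite), the task reduces to obtaining sharp pointwise and $L_p$-estimates for the elementary pieces $\Psi_m\ast a_{j,k}$.

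First I would establish the coordinate-by-coordinate pointwise decay estimate
\[
|(\Psi_m\ast a_{j,k})(x)| \,\lesssim\, \prod_{i=1}^d 2^{-|m_i-j_i|\delta_i}\cdot \ind_{c\,Q_{(j\wedge m),k'}}(x),
\]
where in direction $i$ one argues as follows: if $j_i\ge m_i$ (atom finer than kernel), Taylor-expand $\Psi_{m_i}$ and use the $L$ vanishing moments of $a_{j,k}$ to produce a gain $2^{-(L+1)(j_i-m_i)}$; if $m_i>j_i$ (kernel finer), Taylor-expand $a_{j,k}$ up to order $K$ and use the vanishing moments of $\Psi_{m_i}$ (property (iii) in Section~\ref{sec:spaces}, where $L$ on the kernel side is at our disposal and can be chosen as large as needed), producing a gain $2^{-(K+1)(m_i-j_i)}$. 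The support restriction comes from the compact supports of both $\Psi$ (Remark~\ref{up}) and the atoms. The hypotheses $K\ge(1+\lfloor r\rfloor)_+$ and $L\ge\lfloor\sigma_p-r\rfloor$ are tailored exactly so that both exponents strictly dominate the thresholds $r$ (needed on one side) and $1/p-r$ (needed on the other, in the quasi-Banach regime).

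Second, I would take the $L_p$-norm of $\sum_k \lambda_{j,k}(\Psi_m\ast a_{j,k})$ for fixed $j,m$. Because for fixed $j$ the dilated cubes $\gamma Q_{j,k}$ have bounded overlap independent of $j$ and $k$, the supports of the pieces are essentially disjoint, and (using the $p$-triangle inequality when $p<1$) one obtains
\[
\Big\|\sum_{k\in\Z^d}\lambda_{j,k}(\Psi_m\ast a_{j,k})\Big\|_p \,\lesssim\, \prod_{i=1}^d 2^{-|m_i-j_i|\delta_i}\cdot 2^{-|j\wedge m|_1/p}\,\Big(\sum_{k\in\Z^d}|\lambda_{j,k}|^p\Big)^{1/p},
\]
the factor $2^{-|j\wedge m|_1/p}$ being the $L_p$-volume of the support cube. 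Multiplying by $2^{s|m|_1}$ and summing first in $j$ via Young's/Minkowski's inequality on $\Z^d$ (viewing $\prod_i 2^{-|m_i-j_i|\delta_i}$ as a convolution kernel in $m-j$), then taking the $\ell_\theta$-norm in $m$, the series converge precisely because the exponents $\delta_i$ beat both $s$ and $1/p-s$ under our assumptions on $K,L$. Rearranging puts the remaining $j$-sum into the form on the right-hand side of~\eqref{flelambda}.

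The main obstacle is the quasi-Banach regime $p<1$ (and $\theta<1$): the $p$-triangle inequality reduces the margin in the pointwise decay estimates, and it is here that the condition $L\ge\lfloor\sigma_p-r\rfloor$ is sharp, since one effectively needs $(L+1)\theta_p>\sigma_p-r$ with $\theta_p=\min(1,p)$ to close the summation over finer-scale atoms hitting coarser kernel scales. The mixed-smoothness bookkeeping (treating each coordinate independently and only combining at the final $\ell_\theta$-step) adds a layer of complication over the isotropic case, but the separability of the weight $2^{s|m|_1}=\prod_i 2^{s m_i}$ lets one carry out the coordinate-wise convolutions cleanly. Finally, justifying the interchange of convolution and summation for general quasi-Banach coefficients requires a standard truncation-and-limit argument, which works once the a priori bound~\eqref{flelambda} is available for finite sums.
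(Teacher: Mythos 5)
The paper does not prove Proposition~\ref{atomicdec}; it simply quotes it from Vyb\'iral \cite{Vyb06}, so there is no in-paper argument to compare against. Your sketch follows the standard Frazier--Jawerth strategy adapted to the dominating-mixed setting, which is indeed the route Vyb\'iral takes: coordinate-wise two-scale pointwise estimates for $\Psi_m\ast a_{j,k}$ using Taylor expansion against moments, an $L_p$-summation in $k$, and a final convolution inequality in the scale indices $j,m$. The overall plan is sound.

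However, one step as written is incorrect and needs repair. You assert that ``the supports of the pieces $\Psi_m\ast a_{j,k}$ are essentially disjoint'' for fixed $j$. This is true only in the coordinates where $j_i\le m_i$. In a coordinate with $j_i>m_i$, the support of $\Psi_{m_i}\ast a_{j,k}$ is $\gamma Q_{j_i,k_i}+\mathrm{supp}\,\Psi_{m_i}$ and has length of order $2^{-m_i}$, so roughly $2^{j_i-m_i}$ of the convolved pieces overlap at each point; bounded overlap of the $\gamma Q_{j,k}$ does not transfer to the convolutions. For $p\le 1$ the $p$-triangle inequality you invoke is indifferent to overlap and your displayed bound is correct. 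For $p>1$ the naive ``disjointness'' bound is false as stated: one must either peel off the sum over $k$ first via Young's inequality $\|\Psi_m\ast g\|_p\le\|\Psi_m\|_1\,\|g\|_p$ (thereby forgoing the two-scale gain in those directions, which is then compensated by the fact that when $p\ge1$ one only needs $r>0$ and $L$ may equal $-1$), or insert a H\"older step that costs an extra factor $\prod_i 2^{(j_i-m_i)_+(1-1/p)}$ and verify that the moment hypothesis $L\ge\max\{-1,\lfloor\sigma_p-r\rfloor\}$ still dominates the resulting exponent. The accounting does close, but the overlap has to be confronted explicitly, not dismissed. As a smaller point, the two-scale gains should be $2^{-(L+2)(j_i-m_i)}$ (the extra $+1$ comes from the $L_1$-normalization of the dilated kernel) and $2^{-K(m_i-j_i)}$ (Taylor of a $C^K$ atom to degree $K-1$), rather than $(L+1)$ and $(K+1)$; this off-by-one does not affect the conclusion but should be fixed for the numerology to match the stated thresholds on $K$ and $L$.
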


\noindent Consider a function $\varphi \in C_0^\infty(\R)$ with support in $[0,1]$. Now we define
$$
   a_{j,k}(x_1,...,x_d) := \varphi(2^{j_1}x_1-k_1)\cdot...\cdot \varphi(2^{j_d}x_d-k_d)\quad,\quad x\in \R^d,\,j\in \N_0^d,\, k\in \Z^d
$$
and observe that $a_{j,k}$ is $(K,-1)$-atom for any $K\in \N_0$ centered at $Q_{j,k}$ with $\gamma = 1$ according to
Definition \ref{atom}. Note, that we do not have moment conditions. According to Proposition \ref{atomicdec} we do not
need moment conditions if $r>\sigma_p$.

\subsection{Test functions}
Now we are in a position to define test functions of type \eqref{tf} in order to prove the Theorem below. 
By \eqref{flelambda} we are able to control the norm $\|\cdot\|_{\Bspt}$. Following \cite[Thm.\ 4.1]{DU14} we will use test functions of type
\begin{equation}\label{g_1}
g^1_{s,\theta} := C2^{-sm}m^{-(d-1)/\theta} \sum\limits_{|j|_1=m+1}\sum\limits_{k \in K_j(X_n)} a_{j,k}\,,
\end{equation}
where $D_\ell := \{1,...,2^{\ell}-1\}$, $\ell\in \N$, and 
$K_j(X_n) \subset \mathbb{D}_j := D_{j_1} \times ...\times D_{j_d}$ 
depends on the set of integration nodes $X_n:=\{x^1,...,x^n\}$\,. 

Let $\Ao \cap C(\R^d)$ denote the function class  $\Ao \cap C(\R^d)$ equipped with the norm $\|\cdot\|_{\Aspt}$\,.

\begin{thm}\label{thm:lb} Let $0<p,\theta \leq \infty$ and $s>\sigma_p:=\max\{0,1/p-1\}$.\\ 
{\em (i)} Then
$$
    \mbox{Int}_n(\Bo \cap C(\R^d)) \gtrsim n^{-s+(1/p-1)_+}(\log n)^{(d-1)(1-1/\theta)_+}\,.
$$
{\em (ii)} If $1\leq p <\infty$ then 
$$
    \mbox{Int}_n(\Fo \cap C(\R^d)) \gtrsim n^{-s}(\log n)^{(d-1)(1-1/\theta)_+}\,.
$$
{\em (iii)} If $0<p<1$ then 
$$
    \mbox{Int}_n(\Fo \cap C(\R^d)) \gtrsim n^{-s+1/p-1}\,.
$$
\end{thm}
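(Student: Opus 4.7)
My plan is to adapt the fooling-function technique of \cite{DU14}, based on atomic decompositions. Fix an arbitrary cubature rule $Q_n(f) = \sum_{i=1}^n \lambda_i f(x^i)$ with nodes $X_n = \{x^1,\dots,x^n\}$. In each of the three cases I will construct a smooth function $g$ with $\supp(g)\subset[0,1]^d$, $g(x^i)=0$ for every $i$, controlled $\Aspt$-norm, and a matching lower bound for $I(g)$. Then $Q_n(g)=0$, so $|I(g)-Q_n(g)|/\|g\|_{\Aspt}$ is a valid lower bound for the worst-case error of $Q_n$, and taking the infimum over $Q_n$ will yield the theorem.

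The building blocks are the non-negative tensorized atoms $a_{j,k}(x)=\prod_i \varphi(2^{j_i}x_i-k_i)$ with $\varphi\in C_0^\infty([0,1])$, $\varphi\ge0$, $\int\varphi>0$. These are $(K,-1)$-atoms for every $K$, and are supported inside $[0,1]^d$ whenever $k\in\mathbb{D}_j$. The crucial cube-avoidance step is to pick $m\in\N$ with $2^m\asymp n$, large enough that $n\le\tfrac12|\mathbb{D}_j|$ for every $j$ with $|j|_1=m+1$ (and $j_i\ge1$). Since for fixed such $j$ the cubes $\{Q_{j,k}\}_{k\in\mathbb{D}_j}$ are pairwise disjoint, each $x^i$ hits at most one of them, and the good set $K_j(X_n)=\{k\in\mathbb{D}_j : x^i\notin Q_{j,k}\text{ for all }i\}$ satisfies $|K_j(X_n)|\gtrsim 2^m$. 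Any linear combination of atoms $a_{j,k}$ with $k\in K_j(X_n)$ then vanishes at every node and has support in $[0,1]^d$.

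For the ranges producing a non-trivial logarithmic factor (part (i) with $\theta\ge1,p\ge1$, and all of part (ii)) I take the hyperbolic-layer fooling function
$$g \,=\, c\,2^{-sm}\,m^{-(d-1)/\theta}\sum_{|j|_1=m+1}\sum_{k\in K_j(X_n)} a_{j,k},$$
whose integral evaluates to $I(g)\asymp n^{-s}(\log n)^{(d-1)(1-1/\theta)}$ after using $\int a_{j,k}\asymp 2^{-m}$, the cardinality $\asymp m^{d-1}$ of the hyperbolic layer, and $|K_j|\asymp 2^m$. For the Besov case Proposition~\ref{atomicdec} gives $\|g\|_{\Bspt}\lesssim 1$ directly. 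For the Triebel-Lizorkin case (ii) I will invoke the analogous atomic decomposition for mixed $\T$-spaces from Vyb\'iral \cite{Vyb06} and estimate the resulting $\|(\sum 2^{s|j|_1\theta}|\lambda_{j,k}|^\theta\chi_{Q_{j,k}})^{1/\theta}\|_p$ using the pointwise bound $S(x):=\sum_{|j|_1=m+1}\sum_{k\in K_j(X_n)}\chi_{Q_{j,k}}(x)\lesssim m^{d-1}$ (each $j$ contributes at most one cube containing $x$) together with $\int S\asymp m^{d-1}$; both regimes $p\le\theta$ and $p\ge\theta$ then yield $\|g\|_{\Fspt}\lesssim 1$ via H\"older or Jensen. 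In the remaining ranges the claimed log exponent is $0$, and a single atom $g = c\,2^{-(m+1)(s-1/p)}a_{j_0,k_0}$ with $k_0\in K_{j_0}(X_n)$ suffices: Proposition~\ref{atomicdec} (and its $\T$-analog) gives $\|g\|_{\Aspt}\lesssim 1$ and $I(g)\asymp n^{-s+1/p-1}$, which is the claimed rate precisely when $p<1$ (for $p\ge1$ this is weaker than the hyperbolic-layer bound and we instead use a single-$j$ version of the display above).

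The principal obstacle will be the $\T$-norm estimate in case (ii). Proposition~\ref{atomicdec} covers only the Besov scale, and using the embedding $\mathring{\B}^s_{p,\min\{p,\theta\}}\hookrightarrow\Fo$ would give log exponent $(1-1/\min\{p,\theta\})_+$, strictly worse than the sharp $(1-1/\theta)_+$ whenever $\theta>p\ge1$. Hence applying the $\T$-atomic decomposition directly is unavoidable; the saving combinatorial fact is that for each anisotropic level $j$ at most one cube $Q_{j,k}$ can contain a given point $x$, making $S$ bounded by $m^{d-1}$ pointwise and of integral $\asymp m^{d-1}$, regardless of $p$ and $\theta$. All other ingredients (selecting $m$, counting good cubes, evaluating atomic integrals, invoking Proposition~\ref{atomicdec}) are routine.
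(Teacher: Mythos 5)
Your case analysis for part (i) has a genuine gap: when $p<1$ but $\theta>1$, the claimed lower bound is $n^{-s+1/p-1}(\log n)^{(d-1)(1-1/\theta)}$, which carries \emph{both} a nontrivial main-rate correction from $p<1$ and a nontrivial logarithm from $\theta>1$. Your single-atom construction only yields $n^{-s+1/p-1}$, and your hyperbolic-layer function $g^1$ (one can check it still has $\Bspt$-norm $\lesssim1$ for $p<1$) only yields $n^{-s}(\log n)^{(d-1)(1-1/\theta)}$, which is a \emph{weaker} lower bound since $n^{-s}<n^{-s+1/p-1}$. What is needed here is the ``thin'' hyperbolic fooling function used in the paper (its $g^3$): one atom per level, $g = C\,2^{-sm}m^{-(d-1)/\theta}2^{m/p}\sum_{|j|_1=m+1}a_{j,k_j}$ with $k_j\in\mathbb{D}_j(X_n)$. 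For $p<1$ the inner $\ell_p$-sum over $k$ in \eqref{flelambda} now has a single term, and the extra factor $2^{m/p}$ exactly cancels $2^{-|j|_1/p}$; one then gets $\|g\|_{\Bspt}\lesssim1$ and $\int g\asymp 2^{(-s+1/p-1)m}m^{(d-1)(1-1/\theta)}$ as required. Without this case your proof of (i) is incomplete.

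Separately, your justification for proving (ii) directly via a Triebel--Lizorkin atomic decomposition rests on a misidentification of the relevant embedding. You correctly note that $\mathring{\B}^s_{p,\min\{p,\theta\}}\hookrightarrow\Fo$ loses a log, but that is not the embedding to use. For $\theta\geq p$ one uses $\mathring{\B}_{\theta,\theta}^s=\mathring{\T}_{\theta,\theta}^s\hookrightarrow\Fo$ (Jensen/H\"older in $p$), and for $\theta<p$ one uses $\Bo\hookrightarrow\Fo$ from Lemma~\ref{emb}(i); in both cases part (i) with the adjusted parameters yields exactly $n^{-s}(\log n)^{(d-1)(1-1/\theta)_+}$ since $(1/\theta-1)_+=(1/p-1)_+=0$ when $p,\theta\geq1$. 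This is the route the paper takes, and it avoids having to invoke the $\T$-variant of Vyb\'iral's atomic decomposition (which Proposition~\ref{atomicdec} as stated does not cover). Your pointwise bound $S(x)\lesssim m^{d-1}$ together with $\int S\asymp m^{d-1}$ would indeed make the direct $\T$-estimate work once the $\T$-atomic decomposition is supplied, so this part is a detour rather than an error; the missing subcase in (i), however, must be fixed.
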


\bproof We only need to prove (i). In fact, (ii) follows from (i) and the embeddings 
$\ensuremath{\mathring{\mathbf B}_{\theta,\theta}^s} = \mathring{\mathbf F}_{\theta,\theta}^s \hookrightarrow \Fo$ 
if $\theta \geq p$ (H\"older's inequality)
and $\Bo \hookrightarrow \Fo$ if $\theta < p$ (Lemma~\ref{emb}). The relation in (iii) follows from (i) and the embedding
$\ensuremath{\mathring{\mathbf B}_{p,\min\{p,\theta\}}^s} \hookrightarrow \Fo$ where $\min\{p,\theta\}<1$\,.\\
Let us prove (i): We follow the arguments in \cite[Thm.\ 4.1]{DU14}. Let $n$ be given and $X_n = \{x^1,...,x^n\} \subset [0,1]^d$ be an 
arbitrary set of $n$ points. Without loss of generality we assume that $n = 2^m$. Since $Q_{j,k} \cap Q_{j,k'} = \emptyset$ for $k\neq k'$ we have for every $|j|_1 = m+1$ a set $\mathbb{D}_j(X_n) \subset \mathbb{D}_j$ 
with $\#\mathbb{D}_j(X_n) \gtrsim 2^{|j|_1}$ and $X_n \cap \mathbb{D}_j(X_n) = \emptyset$.

{\em (a)} Let $p,\theta \geq 1$. We choose the test function \eqref{g_1} with $K_j(X_n) := \mathbb{D}_j(X_n)$. Note, that for every $k\in K_j(X_n)$ we have that 
$\supp\, a_{j,k} \subset [0,1]^d$. The function $g^1_{s,\theta}$ in \eqref{g_1} is defined via a finite sum of continuous functions with support contained in $[0,1]^d$. Therefore, $g^1_{s,\theta}$ is continuous and $\supp\, g^1_{s,\theta} \subset [0,1]^d$. By Proposition \ref{atomicdec} and \eqref{flelambda} we can arrange $C>0$
such that $\|g^1_{s,\theta}\|_{\Bspt} \leq 1$\,. Clearly, $g^1_{s,\theta}$ belongs to $\Bo \cap C(\R^d)$. It is obvious that 
$$
   \int_{[0,1]^d} g^1_{s,\theta}\,dx \asymp 2^{-ms}m^{(d-1)(1-1/\theta)}\,.
$$
Of course, a cubature rule admitted in \eqref{eq:minimal} that uses the points $X_n$ produces a zero output. This proves (i) in case $p,\theta \geq 1$.

{\em (b)} Let $p\geq 1$ and $\theta <1$. Let us choose a $j\in \N^d$ with $|j|_1 = m+1$ and define the function
\begin{equation} \label{g_2}
  g^2_{s}:=C2^{-sm}\sum\limits_{k\in \mathbb{D}_j(X_n)} a_{j,k}.
\end{equation}
Again, by Proposition \ref{atomicdec} there is a $C>0$ such that $\|g^2_s\|_{\Bspt}\leq 1$. Moreover, we have 
$$
    \int_{[0,1]^d} g^2_{s}\,dx \asymp 2^{-sm}\,.
$$
With the same reasoning as in (a) this proves (i) in case $\theta <1$.

{\em (c)} Let $p<1$ and $\theta\geq 1$. We define the test function 
$$
    g^3_{s,p,\theta}:= C2^{-sm}m^{-(d-1)/{\theta}}2^{m/p}\sum\limits_{|j|_1 = m+1}a_{j,k_j}\,,
$$
where $k_j$ is chosen from $\mathbb{D}_{j}(X_n)$. By \eqref{flelambda} we find a $C>0$ such that $\|g^3_{s,\theta}\|_{\Bspt} \leq 1$. Computing the integral gives 
$$
    \int_{[0,1]^d} g^3_{s,p,\theta}\,dx \asymp 2^{(-s+1/p-1)m}m^{(d-1)(1-1/\theta)}\,.
$$
With the same reasoning as in (a) and (b) this proves (i) in case $p<1$ and $\theta\geq 1$.

{\em (d)} Finally, if $0<p,\theta<1$ we simply take one single atom 
$g^4_{s,p}:=2^{-ms}2^{m/p}a_{j,k}$ with $j \in \N^d$ and $k \in \mathbb{D}_j(X_n)$. 
Again, there is a $C>0$ such that $\|g^4_{s,p}\|_{\Bspt} \leq 1$ and 
$$
    \int_{[0,1]^d} g^4_{s,p}\,dx \asymp 2^{(-s+1/p-1)m}\,,
$$
which finishes the proof. \eproof

\begin{rem} Note that the lower bound in (ii) differs from our upper bounds in the case of ``small smoothness'' $p>\theta$ and 
$1/p<s\le \min\{1,1/\theta\}$. 
Hence, to prove optimality of Frolov's cubature formula for each $\Ao$ 
it remains to prove the corresponding lower bound for ``small'' smoothness 
in Triebel-Lizorkin and Sobolev spaces. 
This seems to be very delicate and we leave it as an open problem. 
However, we conjecture that our upper bounds are tight. This is supported by the fact 
that for $d=2$ the corresponding lower bound was proven for the Fibonacci 
cubature formula, see~\cite[Theorem~2.5]{Te93}, 
which is conjectured to be optimal, cf.~\cite{HO14}.
\end{rem}


\bigskip
\noindent
{\bf Acknowledgement.} The authors would like to thank the organizers of the
conference ``Monte Carlo and Quasi-Monte Carlo Methods in Scientific
Computing'' (Leuven, 2014), where this work has been initiated, for providing a
pleasant and fruitful working atmosphere. They would further like to thank
Dauren Bazarkhanov for pointing out the reference \cite{Du2}, and 
Glenn Byrenheid, Aicke Hinrichs, Dinh D\~ung, Erich Novak, Jens Oettershagen,
Winfried Sickel and Vladimir N. Temlyakov for several helpful remarks and
comments on earlier versions of this manuscript. The work of Tino Ullrich is supported by the
Hausdorff-Center for Mathematics, 
University of Bonn, and the DFG-Emmy-Noether programme UL403/1-1. 

\goodbreak

\end{document}